\def\classification#1{\def\@class{#1}}
\DeclareFontFamily{OT1}{rsfs}{}
\DeclareFontShape{OT1}{rsfs}{n}{it}{<-> rsfs10}{}
\DeclareMathAlphabet{\mathscr}{OT1}{rsfs}{n}{it}
\DeclareMathOperator{\supp}{supp}
\DeclareMathOperator{\diam}{diam}
\DeclareMathOperator{\leng}{length}
\DeclareMathOperator{\mix}{mix}
\DeclareMathOperator{\Sym}{Sym}
\DeclareMathOperator{\Prob}{Prob}
\DeclareMathOperator{\Alt}{Alt}
\DeclareMathOperator{\tr}{tr}
\DeclareMathOperator{\Id}{Id}
\newtheorem{prop}{Proposition}[section]
\newtheorem{thm}[prop]{Theorem}
\newtheorem{lem}[prop]{Lemma}
\theoremstyle{remark}
\numberwithin{equation}{section}
\begin{document}

\title[Random generators of the symmetric group]{Random generators of the symmetric
group:\\ diameter, mixing time and spectral gap}
\author{Harald A. Helfgott}
\address{Harald A. Helfgott, 
\'Ecole Normale Sup\'erieure, D\'epartement de Math\'ematiques, 45 rue d'Ulm, F-75230 Paris, France}
\email{harald.helfgott@ens.fr}
\author{\'{A}kos Seress}
\address{\'{A}kos Seress (deceased),
Centre for the Mathematics of
Symmetry and Computation,
The University of Western Australia,
Crawley, WA 6009 Australia,
and
Department of Mathematics,
The Ohio State University,
Columbus, OH 43210, USA}
\email{akos@math.ohio-state.edu}
\author{Andrzej Zuk}
\address{Andrzej Zuk,
Institut de Math\'ematiques,
Universit\'e Paris 7,
13 rue Albert Einstein,
75013 PARIS}
\email{zuk@math.jussieu.fr}

\begin{abstract}
Let $g$, $h$ be a random pair of generators of $G=\Sym(n)$ or $G=\Alt(n)$.
We show that, with probability tending to $1$ as $n\to \infty$, (a)
the diameter of $G$ with respect to $S = \{g,h,g^{-1},h^{-1}\}$ is at 
most $O(n^2 (\log n)^c)$, and (b) the mixing time of 
$G$ with respect to $S$
is at most $O(n^3 (\log n)^c)$. (Both $c$ and the implied constants are absolute.)

These bounds are far lower than the strongest worst-case bounds known
(in Helfgott--Seress, 2013); they roughly match 
the worst known examples. We also give an improved, though still
non-constant, bound on the spectral gap.

Our results rest on a combination of the algorithm in (Babai--Beals--Seress,
2004) and the fact that the action of a pair of random permutations
is almost certain to act as an expander on $\ell$-tuples, where $\ell$
is an arbitrary constant (Friedman et al., 1998).
\end{abstract}

\maketitle

\section{Introduction}
\subsection{Results}
Let $G$ be a finite group. 
Let $S$ be a set of generators of $G$; assume $S = S^{-1}$. The (undirected)
{\em Cayley graph} $\Gamma(G,S)$ is the graph having the elements of $G$
as its vertices and the pairs $\{g,g s\}$ ($g\in G$, $s\in S$) as its
edges. The {\em diameter} of $G$ with respect to $S$ is the diameter
$\diam(\Gamma(G,S))$ of the Cayley graph $\Gamma(G,S)$:
\[\diam(\Gamma(G,S)) = \max_{g_1, g_2\in G} \mathop{\min_{\text{$P$ a
      path}}}_{\text{from $g_1$ to $g_2$}} \leng(P),\]
where the {\em length} of a path is the number of edges it traverses.
In other words, $\diam(\Gamma(G,S))$ is the maximum, for $g\in G$, of
the length $\ell$ of the shortest expression $g = s_1 s_2 \dotsc s_\ell$
with $s_i\in S$.

\begin{thm}\label{thm:diam}
Let $S = \{g,h,g^{-1},h^{-1}\}$, where $g$, $h$ are elements of
$\Sym(n)$ taken at random, uniformly and independently.
Let $G = \langle S\rangle$. Then, with probability
$1-o(1)$, the diameter $\diam(\Gamma(G,S))$ of $G$ with respect to $S$
is at most $O(n^2 (\log n)^c)$, where $c$ and the implied
constant are absolute.
\end{thm}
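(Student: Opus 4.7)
The plan is to adapt the Babai--Beals--Seress algorithm to the random-generator setting, using the expansion of random pairs of permutations acting on $\ell$-tuples (Friedman et al.) to replace ``long conjugators'' by words of length $O(\log n)$.

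\textbf{Step 1 (tuple expansion).} By Friedman et al., for every fixed positive integer $\ell$, with probability $1-o(1)$ the action of $S=\{g,h,g^{-1},h^{-1}\}$ on the set of ordered $\ell$-tuples of distinct elements of $[n]$ has spectral gap bounded below by an absolute positive constant. Consequently the Schreier graph on $\ell$-tuples has diameter $O(\log n)$: for any two ordered $\ell$-tuples $\x,\y$, there is a word in $S$ of length $O(\log n)$ sending $\x$ to $\y$.

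\textbf{Step 2 (short word for a bounded-support element).} Starting from $\sigma_0=g$ and iterating the commutator operation $\sigma_{i+1}=[\sigma_i,w_i\sigma_iw_i^{-1}]$ with $w_i$ appropriately chosen, the support of $\sigma_i$ shrinks by a definite factor at each step, so that after $O(\log n)$ iterations the image of $\sigma_i$ in $G$ has support bounded by an absolute constant. When $|\supp\sigma_i|$ is small, Step 1 (with $\ell=|\supp\sigma_i|$) yields a suitable $w_i$ of length $O(\log n)$; when it is still large, the same $w_i$ is obtained by a probabilistic argument using expansion on small-size tuples to control $|w_i(\supp\sigma_i)\cap\supp\sigma_i|$. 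Raising to an appropriate power and, in the $G=\Sym(n)$ case, exploiting parity with an odd generator, we extract a $3$-cycle $\tau$ and (when needed) a transposition $\tau'$, each represented by a word of length at most $n(\log n)^{O(1)}$.

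\textbf{Step 3 (assembly).} By Step 1 with $\ell=3$, every $3$-cycle in $G$ can be written as $w\tau w^{-1}$ for some word $w$ in $S$ of length $O(\log n)$, hence of total length $n(\log n)^{O(1)}+O(\log n)=n(\log n)^{O(1)}$; the same for transpositions. Every element of $\Alt(n)$ is a product of at most $n$ three-cycles, and every element of $\Sym(n)$ of at most $n$ three-cycles times at most one transposition. Multiplying the corresponding short words yields, for any target element of $G$, a representation of length
\[
O(n)\cdot n(\log n)^{O(1)}=O(n^2(\log n)^c),
\]
as claimed.

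\textbf{Main obstacle.} The delicate part is Step 2: the Babai--Beals--Seress shrinking procedure has to be executed within the random-generator model so as to simultaneously (i) keep the word length at $n(\log n)^{O(1)}$, (ii) produce a conjugator $w_i$ of length $O(\log n)$ at each stage, and (iii) succeed with probability $1-o(1)$ across the $O(\log n)$ iterations. Balancing the multiplicative blow-up of commutators against the rate at which $|\supp\sigma_i|$ actually drops, and meshing the probabilistic (large-support) regime with the tuple-expansion (small-support) regime, is the principal technical work.
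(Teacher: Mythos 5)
Your overall strategy---expansion on $\ell$-tuples via Friedman et al., the Babai--Beals--Seress commutator-shrinking procedure, and assembly from $3$-cycles---is precisely the paper's, and Steps 1 and 3 are sound. The gap is in Step 2, and it is sharper than a balancing issue. You cannot seed the shrinking with $\sigma_0 = g$: a random permutation has support $n - O(\log n)$, and the support estimate $|\supp[s,\tau]| \le 3\,|S \cap S^\sigma| \approx 3\,|S|^2/n$ exceeds $|S|$ whenever $|S| > n/3$, so no shrinking occurs at all from such a start. The paper's indispensable preliminary step, quoted from Babai--Hayes, is to locate with probability $1 - o(1)$ an element $v \in \{h^j,\, g h^j : j \le 10 \log n\}$ having a cycle of length $l \ge 3n/4$ and with $v^l \ne e$; then $s := v^l$ has $|\supp(s)| \le n/4$ and word length $O(n \log n)$, and this is the element that seeds the recursion.

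Second, ``shrinks by a definite factor'' over ``$O(\log n)$ iterations'' would wreck the target bound: each commutator roughly quadruples word length, so $O(\log n)$ iterations incur a factor $4^{O(\log n)} = n^{\Theta(1)}$, far beyond the $(\log n)^{O(1)}$ one can afford on top of the initial $O(n \log n)$. What the paper actually proves is the quadratic recursion $|\supp(s_{j+1})| \le 3 + O\bigl(|\supp(s_j)|^2 / n\bigr)$, which---because the support starts below $n/4$---drives the support down to $\le 3$ in only $O(\log \log n)$ steps, keeping the cumulative word-length blow-up at $4^{O(\log \log n)} = (\log n)^{O(1)}$. One must also guarantee each commutator is non-identity: the paper conditions the conjugating walk $\sigma$ on sending a chosen pair $(y_1, y_2)$ to $(y_1', y_2')$ with $y_1, y_1' \in \supp(s)$, $y_2 \notin \supp(s)$, $y_2' = (y_1')^{s^{-1}}$, and this conditioning is exactly why $\ell = 3$ expansion (rather than $\ell = 1$) is required---to control the conditional distribution of $x^\sigma$ given $y_1^\sigma = y_1'$ and $y_2^\sigma = y_2'$.
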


In the study of permutation groups, bounds are wanted not just for the
diameter but also for two closely related quantities that give a 
finer description of the quality of a generating set $S$.
The {\em spectral gap} is the difference $\lambda_0-\lambda_1$ between the
two largest eigenvalues $\lambda_0$, $\lambda_1$ (where $\lambda_0=1$ and
$\lambda_0\geq \lambda_1$)
of the normalized adjacency matrix $\mathscr{A}$ on $\Gamma(G,S)$, seen
as an operator on functions $f:G\to \mathbb{C}$:
\begin{equation}\label{eq:lublu}\mathscr{A} f(g) := \frac{1}{|S|} \sum_{h\in S} f(g h) .\end{equation}

The other quantity is the {\em mixing time}. A lazy random walk on $\Gamma(G,S)$
consists of taking $x_1,x_2,\dotsc \in G$ at random and independently with
distribution \begin{equation}\label{eq:ocio}
\mu = \frac{1}{2} 1_{\{e\}} + \frac{1}{2 |S|} 1_S,\end{equation}
where $1_A(x) = 1$ if $x\in A$ and $1_A(x)=0$ if $x\notin A$; the outcome of
the lazy random walk of length $k$ is $x_1 x_2 \dotsb x_k$. The 
{\em $(\epsilon,d)$-mixing time} $t_{\mix,\epsilon,d}$ is the least $k$ such that the distribution
$\mu^{(k)} = \mu \ast \mu \ast \dotsb \ast \mu$ of the outcome of a lazy random
walk of length $k$ is very close to the uniform distribution $1_G/|G|$ on $G$:
\[d\left(\mu^{(k)},\frac{1}{|G|} 1_G\right) \leq \epsilon,\]
where $\epsilon>0$ and $d$ is a distance function on $\mathbb{C}^G$ (e.g.,
$d=\ell^1, \ell^2, \ell^\infty$). What may be called the {\em strong}
($\epsilon$-)mixing time corresponds to the distance function $|G|\cdot
\ell^\infty$, i.e., the $\ell^\infty$ norm scaled by a factor of $|G|$;
in other words, the strong mixing time with respect to $\epsilon$ equals 
$t_{\mix,\epsilon/|G|,\ell^\infty}$. The mixing time is defined in \cite{MR2466937}
(and several other sources)
with respect to the {\em total variation distance} $\text{TV}$, which is simply 
$(|G|/2) \ell^1$ (where $\ell^1$ is scaled so that $\ell^1(1_G)=1$,
and where $|G|$, as usual, denotes the number of elements of $G$). Thus,
 the mixing time as in \cite{MR2466937} (which sets $\epsilon=1/4$;
see \cite[(4.33)]{MR2466937}) equals $t_{\mix,1/(2|G|),\ell^1}$, which is 
bounded by the strong $(1/2)$-mixing time, i.e.,
$t_{\mix,1/(2|G|),\ell^\infty}$ (since $|\cdot|_1 \leq |\cdot|_\infty$ on a space
of measure $1$). It is easy to check that the strong $(1/2)^k$-mixing time
is bounded by $k$ times the strong $(1/2)$-mixing time. This motivates us to
define {\em strong mixing time} to mean the strong $(1/2)$-mixing time;
thus, as we were saying, the mixing time (defined as usual, with respect to the TV norm and $\epsilon=1/4$) is bounded by the strong mixing time.

\begin{thm}\label{thm:mix}
Let $S = \{g,h,g^{-1},h^{-1}\}$, where $g$, $h$ 
are elements of
$\Sym(n)$ taken at random, uniformly and independently.
Let $G = \langle S\rangle$. Then, with probability
$1-o(1)$, the spectral gap is bounded from below by a constant times
$1/(n^3 (\log n)^c)$:
\[\begin{aligned}
\lambda_0 - \lambda_1 \gg  \frac{1}{n^3 (\log n)^c}
\end{aligned}\]
and the strong mixing time is bounded from above by $O(n^3 (\log n)^c)$:
\[
t_{\mix,1/(2|G|),\ell^\infty} \ll n^3 (\log n)^c,
\]
where $c$ and the implied constants are absolute.
\end{thm}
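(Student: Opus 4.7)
My plan is to bound the spectral gap first and then deduce (or refine) the mixing time. Decompose the normalized adjacency operator $\Ann$ on $\ell^2(G)$ according to the irreducible representations $\rho_\mu$ of $G=\Sym(n)$, indexed by partitions $\mu$ of $n$, so that
\[
\lambda_0-\lambda_1 \;=\; 1-\max_{\mu\neq(n)}\|\rho_\mu(\Ann)\|.
\]
Fix a constant $\ell_0$ (or one growing very slowly with $n$) and split the non-trivial partitions into a \emph{small} regime $n-\mu_1\le \ell_0$ and a \emph{large} regime $n-\mu_1> \ell_0$.

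For the small regime, every such $\rho_\mu$ is a constituent of the permutation representation on ordered $\ell_0$-tuples of distinct points from $\{1,\ldots,n\}$. The Friedman et al.\ theorem, applied to the random pair $(g,h)$, says that with probability $1-o(1)$ the corresponding Schreier graph is an expander with spectral gap at least some absolute constant $\epsilon(\ell_0)>0$; hence $\|\rho_\mu(\Ann)\|\le 1-\epsilon(\ell_0)$ uniformly, a bound far stronger than needed.

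The large regime is where the main difficulty lies. The naive estimate $\lambda_0-\lambda_1\gg 1/\diam^2$ applied with $\diam\ll n^2(\log n)^c$ gives only $1/(n^4(\log n)^{2c})$, an extra factor of $n$ worse than the stated bound. To save this factor of $n$, I would use the constructive nature of Theorem~\ref{thm:diam}: the Babai--Beals--Seress algorithm produces short words (of length $O(n(\log n)^c)$) realizing \emph{every} permutation of support at most $\ell_0$, along with enough conjugates to cover $G$. Feeding these short words into a Poincar\'e-type comparison between the Dirichlet form of $\Gamma(G,S)$ and that of, say, the transposition generating set (whose spectrum on each $\rho_\mu$ is classically well-behaved) should yield $\lambda_0-\lambda_1\gg 1/(n^3(\log n)^c)$ on the large regime, combining with the constant-size bound on the small regime to prove the spectral gap statement.

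The mixing time would then follow from the spectral gap through the standard $\ell^\infty$ bound $t_{\mix,1/(2|G|),\ell^\infty}\ll (\lambda_0-\lambda_1)^{-1}\log|G|$; but since $\log|G|=O(n\log n)$ this yields only $O(n^4(\log n)^{c+1})$. To reach the sharper $O(n^3(\log n)^c)$ I would argue in two stages: first run the walk for $O(n^2(\log n)^c)$ steps (matching Theorem~\ref{thm:diam}) so that the distribution has roughly full support on every conjugacy class, then use the expander property on $\ell_0$-tuples to equilibrate within each class in $O(n)$ additional rounds. This two-phase argument avoids the $\log|G|$ inflation inherent in converting a spectral-gap estimate to an $\ell^\infty$ mixing bound, which is exactly the factor one needs to shave off.
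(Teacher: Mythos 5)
Your overall plan for the spectral gap---compare the Dirichlet form of the random-generator walk with that of a conjugation-invariant walk, using the fact that each generator of the latter can be written as a short word in $g,h$---is indeed the paper's strategy (via Diaconis--Saloff-Coste). But two parts of your proposal need correcting, and one part has a genuine gap.

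First, the small/large split over partitions gains you nothing here. The partition achieving the spectral gap of the $3$-cycle walk is $(n-1,1)$, which is ``small'' in your sense, and the Frobenius formula gives $\delta_{(n-1,1)}(p')=3/(n-1)$. The Friedman et al.\ expander bound on $\ell_0$-tuples does give a \emph{constant} gap on the corresponding isotypic components of $L^2(G)$ for the random walk itself, so those components are not the bottleneck; but for a partition $\mu$ just outside the small regime, say $n-\mu_1=\ell_0+1$, you still only have $\delta_\mu(p')\approx \ell_0/n$, and comparison gives $\delta_\mu(p)\gtrsim \ell_0/(nA)$. With $\ell_0$ a fixed constant (which it must be, since the Friedman et al.\ constant degrades in $\ell_0$ and is only given for each fixed $\ell_0$), this is the same order as the unsplit bound $\delta(p)\geq (3/(n-1))/A$. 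The paper simply applies the global comparison $\delta(p)\geq\delta(p')/A$ with $A\ll (n(\log n)^c)^2$ coming from Proposition~\ref{prop:croute}; that already gives $1/(n^3(\log n)^{2c})$ with no representation bookkeeping. A separate remark: you propose to compare with the \emph{transposition} walk, but this fails when $\langle g,h\rangle=\Alt(n)$ (transpositions are not in $G$). The paper compares with the $3$-cycle walk, or with a suitable measure on cosets of $\Alt(n)$ when $g\notin\Alt(n)$ (this is where Proposition~\ref{prop:garna} is used).

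Second, and this is the real gap, your two-phase mixing argument does not hold up. The statement ``run $O(n^2(\log n)^c)$ steps so the distribution has roughly full support on every conjugacy class, then use the expander property to equilibrate within each class'' does not describe a valid mechanism: multiplication by a fixed generator does not preserve conjugacy classes, so there is no ``equilibration within a class,'' and having full support is a very weak property that says nothing useful about $\ell^2$ or $\ell^\infty$ distance. Moreover your claimed round counts ($O(n^2(\log n)^c)+O(n)$) do not even add up to the stated $O(n^3(\log n)^c)$. What the paper actually does is bypass the lossy $t_{\mathrm{mix}}\ll\delta^{-1}\log|G|$ conversion entirely by invoking the stronger Diaconis--Saloff-Coste $\ell^2$-comparison inequality
\[
\left|p^{(k)} - \tfrac{1}{|G|}\right|_2^2 \;\leq\; |G|\,e^{-k/2A} \;+\; \left|(p')^{(\lfloor k/2|A|\rceil)} - \tfrac{1}{|G|}\right|_2^2,
\]
and then feeding in the sharp bound $t_{\mathrm{mix},\epsilon/|G|,\ell^2}\ll_\epsilon n\log n$ for the $3$-cycle walk on $\Alt(n)$ (Appendix~\ref{sec:juice}). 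Since $A\ll(n(\log n)^c)^2$, choosing $k\asymp A\cdot n\log n$ gives $k\ll n^3(\log n)^{2c+1}$, and the passage from $\ell^2$ to $\ell^\infty$ costs only a factor of $2$ via $t_{\mathrm{mix},\epsilon^2/|G|,\ell^\infty}\leq 2\,t_{\mathrm{mix},\epsilon/|G|,\ell^2}$. That $\ell^2$-level comparison, combined with the $O(n\log n)$ $\ell^2$-mixing of the $3$-cycle walk, is the ingredient your argument is missing.
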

(Here, as is usual, ``$A\ll B$'' means ``$|A|\leq c B$ for some constant $c$''.
If we wish to emphasize that $c$ depends on some quantity $\delta$, we
write ``$A\ll_\delta B$''. An absolute constant is one that depends on no 
quantity, i.e., a constant that
 is truly a constant. For us, $A\ll B$ and $A = O(B)$ 
are synonyms.)

There is a bound on the spectral gap in terms of the diameter
(see, e.g., \cite[\S 3, Cor. 1]{MR1245303},
or the sources \cite{Aldous}, \cite{Babtech}, \cite{Gangthe},
\cite{Mohtech} given in \cite{MR1245303}) and also a standard bound on the
mixing time in terms of the spectral gap 
(as in, e.g., \cite[Thm.~5.1]{MR1395866}, or, for the $\ell^1$-norm, 
\cite[Thm.~12.3]{MR2466937}).
By means of such bounds, Theorem \ref{thm:diam} implies that
$\lambda_0 - \lambda_1 \gg 1/(n^4 (\log n)^{2c})$
and $t_{\text{mix},1/(2|G|),|G|\cdot \ell^\infty} \ll n^5 (\log n)^{2c}$. 
We obtain the stronger bounds in Thm.~\ref{thm:mix} by using 
most of the {\em proof} of Thm.~\ref{thm:diam}.

Both Thm.~\ref{thm:mix} and Thm.~\ref{thm:diam} rest in part on ideas
from \cite{BBS04} and \cite{Babai-Hayes} and in part on
the fact that, for $S = \{g,h,g^{-1},h^{-1}\}$, where $g$, $h$ are
random elements of $\Sym(n)$, the Schreier graph associated
to the action of $G=\langle S\rangle$ on $\ell$-tuples ($\ell$ constant) is almost certainly
an expander. This fact
was proven in \cite[Thm.~2.1]{MR1639748}. We thank B. Tsaban
for pointing this out to us; an earlier version of the present paper
contained a proof close to the one that can be found in \cite{MR1639748}.
We had been inspired by the proof for the case $\ell=1$ given in
\cite{BroSha}.

We should explain what we mean by ``the Schreier graph is almost certainly
an expander''. The 
{\em Schreier graph } $\Gamma(G\to X,S)$ ($G$ a group, $G\to X$ an action,
$S\subset G$, $S = S^{-1}$, $\langle S\rangle = G$) 
is the graph having $X$ as its
set of vertices and $\{(x,x^s): x\in X, s\in S\}$ as its set of vertices.
(As is common in the study of 
permutation groups, we write $x^s$ for the image $s(x)$
of $x$ under the action of $s$.) Given a Schreier graph, the normalized
adjacency matrix $\mathscr{A}$ is the operator on functions $f:X\to \mathbb{C}$
given by
\begin{equation}\label{eq:mashmi}
\mathscr{A} f(x) := \frac{1}{|S|} \sum_{h\in S} f(x^h) .
\end{equation}
Just as for a Cayley graph, the spectral gap of $\Gamma(G\to X,S)$ is the difference $\lambda_0-\lambda_1$
between the two largest eigenvalues $\lambda_0,\lambda_1$ ($\lambda_0=1$, $\lambda_0\geq \lambda_1$) of $\mathscr{A}$. (Since $S=S^{-1}$, the spectrum is real.)

What was proven in \cite{MR1639748} is that, for every $\ell$, there is a
$\delta>0$ such that the probability that $\lambda_0-\lambda_1\geq \delta$
(for the largest eigenvalues $\lambda_0$, $\lambda_1$ of 
the Schreier graph $\Gamma(G\to X,S)$, where $X$ is the set of
all $\ell$-tuples of distinct elements of $\{1,2,\dotsc,n\}$)
 tends to $1$ as $n\to \infty$.
Here, as we said before,
 $S = \{g,h,g^{-1},h^{-1}\}$ and $G = \langle S\rangle$,
where $g$, $h$ is a pair of random elements of $\Sym(n)$. We will use this
for $\ell=3$.


Lastly, let us remark that most of the arguments in this paper have an 
algorithmic flavor, in part inherited from \cite{BBS04}. This motivates
the following question: can theorem \ref{thm:diam} be made fully
algorithmic? That is, given random elements $g$, $h$ of $\Sym(n)$,
is it true that, with probability $1-o(1)$, for every $x\in \langle g,h\rangle$,
we can quickly find a word $w$ of length $O(n^2 (\log n)^c)$ such that
$w(g,h)=x$? We don't attempt to answer this question fully;
we sketch some ideas in Appendix \ref{sec:algori}.

\subsection{Relation to the previous literature}
The best bounds known for the problem addressed by
Thm.~\ref{thm:diam} were, successively, 
\[O(n^{(1/2+o(1)) \log n})\; \text{\cite{MR1208801}},
O(n^{8+o(1)})\; \text{\cite{Babai-Hayes}},\;\;\;\;
O(n^3 \log n)\; \text{\cite{MR2965280}}.\] The best worst-case bound known
(i.e., the best bound holding for all generating sets $S$)
is
$O(n^{O((\log n)^3 \log \log n)})$ \cite{Helfgott-Seress}.
Back in
\cite{MR2023654}, the kind of question addressed by both
Thm.~\ref{thm:diam} and Thm. \ref{thm:mix}
had been described as ``wide open'' (see 
\cite[Problem 8.11]{MR2023654} and the remarks immediately
following).

The bounds in \ref{thm:diam} and \ref{thm:mix} are close to the actual 
diameter and mixing times for at least some pairs $g$, $h$ (called
``slow shuffles'' in \cite[pp. 284--285]{MR2023654}). Take,
for instance, $g = (1 2 \dotsc n)$, $h = (1 2)$. Let $G = \Sym(n)$ and
$S = \{g,h,g^{-1},h^{-1}\}$. Then $\diam(G,S)$ is in the order of
$n^2$ (see, e.g., \cite{MR1022771}) and the mixing time $t_{\text{mix}}$ is in the
order of $n^3 \log n$ (\cite[p. 337]{MR2023654},
\cite[Thm. 1]{MR1987096}). The same bounds hold for related choices of $g$
and 
$h$, e.g., $g=(1 2 \dotsc n)$, $h = (1 2 \dotsc n-1)$ (the {\em Rudvalis
  shuffle};
see \cite{MR1987096}).

An algorithmic approach is proposed in \cite{MR2925926}; the algorithm
given there is shown to produce words of length $O(n^2 \log n)$
conditionally on a statement that remains unproven (the ``Minimal Cycle
Conjecture'').

In \cite{Diac}, Diaconis mentions ``an old conjecture [of his]'', which
states that the random walk on $\Alt(n)$ for {\em any} generating set
$\{g,h\}$ with two elements ``gets random in at most $n^3 \log n$ steps''
(i.e., has mixing time $O(n^3 \log n)$). 


\subsection{Acknowledgements}
H. Helfgott and A. Zuk regret to communicate that their coauthor,
\'A. Seress, passed away early in 2013. 

The authors would like to thank L. Saloff-Coste, M. Kassabov and I. Pak
for helpful discussions on mixing times, and B. Tsaban, for a
reference to the literature.

H. Helfgott and A. Zuk were supported by 
ANR Project Caesar No. ANR-12-BS01-0011.
H. Helfgott was also partially supported by the Adams Prize and the
Leverhulme Prize.

\section{Construction of small cycles}

We will construct $2$- and $3$-cycles as short words on $g$ and $h$.
The procedure goes back to \cite{BBS04}; we get better results because
our input -- namely, the fact that the Schreier graph on $\ell$-tuples
is almost certainly an expander --  is much stronger than the result
\cite[``Fact 2.1'']{BBS04} used in \cite{BBS04}.

\begin{prop}\label{prop:croute}
Let $g$, $h$ be two random elements of $\Sym(n)$. Then, with probability
$1-o(1)$, every $3$-cycle in $\Sym(n)$
can be written as a word of length $O(n (\log n)^c)$ in $g$ and $h$,
where
$c$ and the implied constant are absolute. 
\end{prop}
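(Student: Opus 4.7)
The plan is to combine the Babai--Beals--Seress strategy of building short permutations by iterated support reduction with the fact (from \cite{MR1639748}) that, almost surely, the Schreier graph $\Gamma(G \to X_3, S)$ on the set $X_3$ of ordered $3$-tuples of distinct points of $\{1,\dotsc,n\}$ is an expander. Expansion on $X_3$ gives diameter $O(\log n)$ on this graph, so any ordered triple can be sent to any other by a word in $S$ of length $O(\log n)$.

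It suffices to construct one specific 3-cycle $\tau_0 \in G$ as a word $w_0$ of length $O(n(\log n)^c)$. Indeed, for any other 3-cycle $\tau = (a\,b\,c)$, one uses the $O(\log n)$-diameter on $X_3$ to pick a word $\pi \in G$ of length $O(\log n)$ mapping the ordered support of $\tau_0$ to $(a,b,c)$; then $\tau = \pi \tau_0 \pi^{-1}$ (up to replacing $\tau_0$ by $\tau_0^{-1}$, which is also a $3$-cycle), and its word length is $|w_0| + 2|\pi| = O(n(\log n)^c)$.

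To produce $w_0$, I would iterate a support-reduction step in the spirit of \cite{BBS04}. Given an element $u \in G$ supported on a set $T \subset \{1,\dotsc,n\}$, one uses the $O(\log n)$-diameter on $X_3$ to find a short conjugator $\sigma \in G$ of length $O(\log n)$ for which $\sigma T$ overlaps $T$ in a prescribed, controlled way. An appropriate combination of $u$ and $\sigma u \sigma^{-1}$ (typically a commutator) then yields an element supported on a strictly smaller set, at the cost of $O(\log n)$ extra letters and a constant factor in the word length of $u$. Iterating $O(\log n)$ times, starting from an initial $u_0$ of moderate support obtained from a short word by a pigeonhole argument combined with expansion, the support shrinks to $3$ points, with total word length still $O(n(\log n)^c)$; a final tidying step (a suitable power or commutator) converts the resulting element into a genuine 3-cycle.

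The main obstacle is the analysis of this iteration: one must show, with failure probability $o(1)$ over the random choice of $g, h$, that at each stage \emph{(i)} a conjugator $\sigma$ of length $O(\log n)$ with the desired action on the current support exists, and \emph{(ii)} the reduction step does not accidentally collapse to the identity. Both statements rest on quantitative mixing of short random walks on $\Gamma(G \to X_3, S)$, which is exactly the content of the expansion bound from \cite{MR1639748}. A further subtle point is to arrange the iteration so that the $O(\log n)$-length cost of each of the $O(\log n)$ rounds accumulates to only a polylogarithmic overhead rather than an exponential one; getting this balance right is what delivers the final $O(n(\log n)^c)$ bound rather than a polynomial of higher degree.
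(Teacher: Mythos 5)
Your overall plan (expansion on $3$-tuples from \cite{MR1639748}, iterated commutator reduction in the spirit of \cite{BBS04}, then a short conjugation to move the resulting small-support element to any prescribed $3$-cycle) is exactly the route the paper takes. However, there are two load-bearing quantitative points that your sketch gets wrong or leaves out, and they are precisely the ones needed to land on $O(n(\log n)^c)$.

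\textbf{The iteration count must be $O(\log\log n)$, not $O(\log n)$.} You propose iterating the reduction $O(\log n)$ times and then worry about the ``subtle point'' that the per-round costs should accumulate only polylogarithmically. The dominant cost is not the additive $O(\log n)$ per round from the conjugator; it is the \emph{multiplicative} factor of $\approx 4$ per round, since the commutator $[s_j,\sigma^{-1}s_j\sigma]$ uses $s_j$ four times. With $O(\log n)$ rounds this is $4^{O(\log n)}=n^{O(1)}$, which destroys the bound. The paper avoids this because the support reduction is \emph{quadratic}: the expansion bound on $X_3$ lets one find a conjugator $\sigma$ of length $O(\log n)$ with $|\supp(s_j)\cap\supp(s_j)^\sigma|\leq 1+O(|\supp(s_j)|^2/n)$, whence $|\supp(s_{j+1})|\leq 3+O(|\supp(s_j)|^2/n)$. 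Starting from $|\supp(s_0)|\leq n/4$, this recurrence collapses to $\leq 3$ after $O(\log\log n)$ rounds, so the accumulated multiplicative factor is only $4^{O(\log\log n)}=(\log n)^{O(1)}$. You never state the quadratic reduction, and without it the word-length bookkeeping does not close.

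\textbf{Where the factor of $n$ comes from.} Your initial element ``$u_0$ of moderate support obtained from a short word by a pigeonhole argument combined with expansion'' is too vague and hides the single place the linear-in-$n$ cost enters. A short word in random $g,h$ will typically move almost every point, so it is not of moderate support. The paper uses \cite[Lem.~6.2]{Babai-Hayes}: with high probability some $v\in\{h^j, gh^j: j\leq 10\log n\}$ has a cycle of length $l\geq 3n/4$, and $s_0:=v^l$ is nontrivial, has support $\leq n/4$, but has word length $O(n\log n)$ because of the exponent $l<n$. That $O(n\log n)$ is the source of the $n$ in $O(n(\log n)^c)$; the commutator iteration only contributes the polylog factors. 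You should make this explicit, since without it the $O(n(\log n)^c)$ target has no visible mechanism.

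A smaller gap: you rightly flag that one must prevent the commutator from collapsing to the identity, but you do not say how. The paper's fix is concrete: pick $y_1,y_1'\in\supp(s)$, $y_2\notin\supp(s)$, $y_2'=(y_1')^{s^{-1}}$, and require $y_i^\sigma=y_i'$ for $i=1,2$; expansion on $X_3$ (not merely $X_1$ or $X_2$) is what guarantees both this constraint and the $O(|\supp(s)|^2/n)$ overlap can be met simultaneously by a short $\sigma$, with the expected overlap computed conditionally on the two pinned points. This is the reason the result uses $\ell=3$-tuples rather than pairs.
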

\begin{proof}
We will first show that there is at least one $2$- or $3$-cycle that
can be written as a word of length $O(n (\log n)^2)$ in $g$ and $h$.
We can assume that the Schreier graph corresponding to the 
action of $\{g,h,g^{-1},h^{-1}\}$ on the set $X$ of 
$3$-tuples of distinct elements
of $\lbrack 1,n\rbrack$
has a spectral gap of size at least $\delta$, 
since \cite[Thm.~2.1]{MR1639748} states that
this is the case with probability $1-o(1)$. Here $\delta>0$ is an absolute
constant.

Just as in the case
of a Cayley graph, the existence of a constant spectral gap means that
the Schreier graph has $(\epsilon,|X|\cdot \ell^\infty)$-mixing time 
$\ll_\delta \log |X|/\epsilon \leq \log n^3/\epsilon = O(\log n/\epsilon)$. 
(This is classical; see the expositions in the proofs of
e.g., \cite[Thm. 5.1]{MR1395866} (undirected graphs) and  
\cite[Lem. 4.1]{Helfgott-Seress}.) In other words, there is an absolute
constant $C$ such that, for every $\epsilon>0$ and every $k\geq 
C \log n/\epsilon$,
the probability $\Prob(\vec{v}^\sigma = \vec{w})$ that the outcome $\sigma$ of
a lazy random walk of length
$k$ on $g$, $h$ will take a given $\vec{v}\in X$ to a given $\vec{w}\in X$
satisfies
\begin{equation}\label{eq:hust}\frac{1-\epsilon}{|X|} \leq
\Prob(\vec{v}^\sigma=\vec{w}) \leq \frac{1+\epsilon}{|X|}.\end{equation}
This implies that, for any two triples of distinct elements
$(x,y_1,y_2), (x',y_1',y_2')\in X$,
\begin{equation}\label{eq:mordo}
\frac{1 - O(\epsilon)}{n-2}\leq  \Prob(x^\sigma = x') \leq \frac{1 
+ O(\epsilon)}{n-2}
,\end{equation}
where the implied constants are absolute.

By \cite[Lem. 6.2]{Babai-Hayes}, since $g$ and $h$ are random,
then, with probability $1-o(1)$, there is a
$j\leq 10 \log n$ such that either $v=h^j$ or $v=g h^j$ is an element
with a cycle of length $l\geq 3 n/4$. The argument in 
\cite[\S 6, proof of Thm.~ 2.2]{Babai-Hayes} gives us, moreover, that
$v^l\ne e$ with probability $1-O((\log n)/n^{1/4})$. Set $s = v^l$.

We can now follow the argument in \cite[\S 2]{BBS04}, 
only with much shorter walks. Let us give it in full for the sake of 
completeness.

Given $x\in \Sym(n)$, write $\supp(x)$ for the support of $x$, 
i.e., the set of elements of
$\lbrack 1,n\rbrack$ moved by $x$.
Choose $y_1,y_1'\in \supp(s)$, $y_2\notin \supp(s)$, $y_2' =
(y_1')^{s^{-1}}$. We define $\lbrack x,y\rbrack$ (the commutator)
to mean $x^{-1} y^{-1} x y$. (This definition is standard for permutation
groups). Let $\sigma\in \Sym(n)$ be such
that $y_i^\sigma = y_i'$ for $i=1,2$. Let $\tau = \sigma^{-1} s \sigma$.
The idea here is that
\[(y_1')^{s^{-1} \tau^{-1} s \tau} = (y_2')^{\tau^{-1} s \tau} = (y_2')^{s \tau} = 
(y_1')^\tau = y_1^{s \sigma} \ne y_1^\sigma = y_1',\]
and this assures us that $\lbrack s,\tau\rbrack$ cannot be the identity.
(This was the entire purpose of our definitions of $y_i$, $y_i'$ for
$i=1,2$, and of the condition $y_i^\sigma = y_i'$.)

We wish to show that $\lbrack s,\tau\rbrack$ has support much smaller than
that of $g$. Let us see -- $s$ is fixed and $\tau = \sigma^{-1} s \sigma$.
How shall we choose $\sigma$? Let $\sigma$ be the outcome of a lazy random
walk on $g$ and $h$
 of length $k\geq C \log n/\epsilon$. Let us
impose the condition that $y_i^\sigma = y_i'$ for $i=1,2$; it is easy
to see from (\ref{eq:hust}) that this happens with positive probability
(provided that $\epsilon>0$ is smaller than an absolute constant). 

Let $S = \supp(s)$. A brief case-by-case analysis (as in \cite[\S 2]{BBS04},
or as in the proof of Prop.~5.3 in the survey article \cite{Helfsur}) gives
us that
\[\supp(\lbrack s,\tau\rbrack) \subset (S\cap S^\sigma) \cup 
(S\cap S^\sigma)^s \cup (S\cap S^\sigma)^\tau\]
and so $|\supp(\lbrack s,\tau\rbrack)|\leq 3 |S\cap S^\sigma|$.
Since expected values are additive,
\[\begin{aligned}
\mathbb{E}(|S\cap S^\sigma| | y_i^\sigma = y_i') 
&= \sum_{x'\in S} \Prob(x'\in S^\sigma | y_i^\sigma=y_i')\\ &= 1 + 
\sum_{x'\in S} \mathop{\sum_{x\in S}}_{x\ne y_1,y_2} \Prob(x^\sigma = x' 
| y_i^\sigma=y_i')\\
&= 1+ \sum_{x'\in S} \mathop{\sum_{x\in S}}_{x\ne y_i} 
\frac{1+ O(\epsilon)}{n-2} = 1 + \frac{(1+ O(\epsilon)) |S|}{n-2} |S|
,\end{aligned}\]
where $i=1,2$. (We are using the assumptions that $y_1,y_1'\in S$ and 
$y_2\notin S$.)
We set $\epsilon$ small enough so $(1+O(\epsilon))/(n-2)$ here is less
than $7n /6$ (say), assuming (as we may) that $n$ is larger than an
absolute constant.

We conclude that there exists a 
$\sigma$ given as a word of length at most $k$ on $g$ and $h$ such that
$y_i^\sigma = y_i'$ for $i=1,2$ and $|S\cap S^\sigma| \leq 
1 + (7/6) |S|^2/n$. As we have seen, this implies that
$\lbrack s,\tau\rbrack$ is a non-identity element such that
\[|\supp(\lbrack s,\tau\rbrack)|\leq 3
\left(1 + \frac{7|S|}{6 n} |S|\right)\leq
3 + \frac{7 |S|}{2 n} |S| = 
3 + \frac{7}{8} |\supp(s)|.\]

Notice that $\lbrack s,\tau\rbrack$ is given by a word
on $g$ and $h$ whose length is at most $4$ times the length of the word
giving $s$, plus $4 k$.

We define $s_1 = \lbrack s,\tau\rbrack$ and iterate, constructing $s_2$,
$s_3$,\dots of decreasing support. We have $\supp(s_{j+1})\leq 3 + (7/2)
\supp(s_j)^2/n$, and so, already for some $j_0\ll \log \log n$, we have
$\supp(s_{j_0})\leq 3$,
where the implied
constants are absolute and  $s_{j_0}$  is not the identity. It is easy to check that $s_{j_0}$ is given as
a word of length at most $O(n (\log n)^c)$ on $g$ and $h$, where $c$ and
the implied constant are absolute.

Again, a random walk of length $k$ takes any $3$-tuple in $X$ to any
other $3$-tuple in $X$ with positive probability. Hence, we can express
any $3$-cycle (or $2$-cycle, if $|\supp(s_{j_0})|=2$)
as $r s_{j_0} r^{-1}$, where $r$ is a word of length
at most $k$ on $g$ and $h$. If $|\supp(s_{j_0})|=2$, note that every
$3$-cycle can be expressed as a product of two $2$-cycles. Hence, in
general, 
we conclude that every $3$-cycle in $\Sym(n)$ can be expressed as a word
of length at most $O(n (\log n)^c)$ on $g$ and $h$, where $c$ and
the implied constant are absolute.
\end{proof}

\section{Diameter, mixing times and spectral gaps}

It is easy to see that Proposition \ref{prop:croute} implies a bound
on the diameter of the Cayley graph.

\begin{proof}[Proof of theorem \ref{thm:diam}]
The diameter of $\Alt(n)$ with respect to the set of $3$-cycles is  $O(n)$. Hence, 
Proposition \ref{prop:croute} implies that, with probability $1$, every element of $\Alt(n)$ can be written as a word of length at most $O(n^2 (\log n)^c)$ on
$g$ and $h$, where $c$ and the implied constant are absolute. 

This implies, in particular, that $g$ and $h$ generate either $\Alt(n)$
or $\Sym(n)$. If $g$ and $h$ generate
$\Alt(n)$, we are done. If they generate $\Sym(n)$, 
then either $g$ or $h$ is in $\Sym(n)\setminus \Alt(n)$. Then every
element of $\Sym(n)$ can be written as a word of length at most $O(n^2 (\log n)^c)+1$ on $g$ and $h$, and so we are done, too.
\end{proof}

We will now see how Proposition \ref{prop:croute}  implies upper bounds on the 
mixing time and spectral gap for the Cayley graph $\Gamma(G,\{g,h,g^{-1},
h^{-1}\})$.
As we discussed in the introduction, 
these bounds are better than what one would obtain by proceeding from
the final result, Thm. \ref{thm:diam}, via comparison methods.


While getting a good bound on the spectral gap (and hence on the mixing time)
is slightly subtler than bounding the diameter,
the basic strategy is similar:

1) Solve the problem for the set of generators $A = \{ \text{$3$-cycles} \}$
of $\Alt(n)$ or the set of generators
\begin{equation}\label{eq:malga}
A = \{ \text{$3$-cycles} \} \cup  \{\text{one element of $\Sym(n)\setminus \Alt(n)$}\}\end{equation} of $\Sym(n)$. This, as we have
seen, is trivial when the problem consists in bounding the diameter. For the problem of bounding the spectral gap, the solution for $G = \Alt(n)$ and
$A = \{\text{$3$-cycles}\}$ is a computation
that we leave for Appendix \ref{sec:juice}; here, we will show how to deduce
from it the spectral gap for $G = \Sym(n)$ and $A$ as in (\ref{eq:malga}).

2) Use the fact that every 3-cycle can be written as a short word in $A$ 
(Proposition \ref{prop:croute}) to give a bound for the spectral
gap with respect to the generating set $A$. 
This was easy for the problem of bounding the diameter. 
To bound the spectral gap, we will use 
a known comparison technique (as in 
\cite{MR1245303}; see \cite[\S 13.5]{MR2466937} for the previous
history of the method).


We will show in Appendix \ref{sec:juice} that the spectral gap in
$\Alt(n)$ with respect to the set $C$ of all $3$-cycles is 
$\lambda_0 - \lambda_1 = 3/(n-1) > 1/n$ (Proposition \ref{prop:lambda}).
If $g$ and $h$ generate $\Sym(n)$ rather than $\Alt(n)$ it is not
enough to (a) prove a spectral gap for $\Alt(n)$ with respect to the set $C$
of $3$-cycles; we must actually (b) prove a spectral gap for $\Sym(n)$
with respect to the set $A = C \cup \{g\}$, where we
assume without loss of generality that $g\notin \Alt(n)$. Let us see how
(a) leads to (b).

Here and in what follows, given a finite set $X$, we write $L^2(X)$ for 
the space of functions $f:X\to \mathbb{C}$,
 equipped with the {\em unnormalized} $L^2$-norm
\[\|f\|_2 = \sqrt{\sum_{x\in X} |f(x)|^2}.\]

Let $M$ be the operator on $L^2(\Alt(n))$ defined by convolution with the
probability measure $p'$ that is uniformly distributed on the set $C\subset \Alt(n)$ of all $3$-cycles. By abuse of language, we also denote
by $M$ the operator on $L^2(\Sym(n))$ given by convolution with $p'$. 
(In other words, $MF = p'\ast F$ for
$F\in L^2(\Alt(n))$ and also for $F\in L^2(\Sym(n))$, with the convolution
being taken in $L^2(\Alt(n))$ and $L^2(\Sym(n))$, respectively.)

For $g\in \Sym(n)\setminus \Alt(n)$, consider the operator \[\widetilde{M}
= \frac{1}{2} (g + g^{-1}) M\] acting on $L^2(\Sym(n))$. (Here $hM$
is defined by $((hM)(F))(x) = 
MF(h^{-1} x)$; in other words, $hM$ is the composition of (1) the convolution with
the point measure $\mu_h$ at $h$ and (2) the operator $M$.)



\begin{prop}\label{prop:garna}
The spectral gap of $\widetilde{M}$ on $L^2(\Sym(n))$ is at least as large as
the spectral gap of $M$ on $L^2(\Alt(n))$.
\end{prop}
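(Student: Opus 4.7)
The plan is to exploit the bipartite-type structure of $\widetilde{M}$ and reduce the problem to one on $L^2(\Alt(n))$. Since $p'$ is supported on the conjugacy class of $3$-cycles, $M$ preserves the splitting $L^2(\Sym(n)) = L^2(\Alt(n))\oplus L^2(\Sym(n)\setminus\Alt(n))$; since $g\notin\Alt(n)$, the measure $\tfrac12(\mu_g+\mu_{g^{-1}})*p'$ defining $\widetilde{M}$ is supported on $\Sym(n)\setminus\Alt(n)$, so $\widetilde{M}$ \emph{swaps} the two summands. A point I will use repeatedly is that $p'$ is a class function on $\Sym(n)$, so $\mu_g*p'=p'*\mu_g$; this both makes $\widetilde{M}$ self-adjoint and makes $M$ commute with the left-translation $T_hF(x):=F(h^{-1}x)$ for any $h$.

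Identifying $L^2(\Sym(n)\setminus\Alt(n))$ with $L^2(\Alt(n))$ via the isometry $\phi:F\mapsto F(g^{-1}\,\cdot\,)$, a direct computation yields, in the resulting decomposition $L^2(\Sym(n))\cong L^2(\Alt(n))\oplus L^2(\Alt(n))$,
\[
\widetilde{M}\ \longleftrightarrow\ \begin{pmatrix} 0 & M' \\ (M')^* & 0\end{pmatrix},\qquad M':=\tfrac{1}{2}(I+T_{g^2})M,
\]
now viewed as operators on $L^2(\Alt(n))$. The eigenvalues of $\widetilde{M}$ are therefore $\pm\sigma_k(M')$, the singular values of $M'$. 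Since $M'\,1_{\Alt(n)} = 1_{\Alt(n)} = (M')^*\,1_{\Alt(n)}$, the top singular value is $1$ and matches $\lambda_0(\widetilde{M})=1$, so the second-largest eigenvalue of $\widetilde{M}$ is $\lambda_1(\widetilde{M}) = \sigma_1(M')$.

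To bound $\sigma_1(M')$, I would compute $(M')^*M' = \tfrac14(2I+T_{g^2}+T_{g^{-2}})M^2$ and diagonalize simultaneously with $M$ --- possible because $M$ and $T_{g^2}$ commute, again since $p'$ is a class function. In a common orthonormal eigenbasis $\{\phi_j\}$ with $M\phi_j=\mu_j\phi_j$ and $T_{g^2}\phi_j=\omega_j\phi_j$ ($|\omega_j|=1$), the eigenvalues of $(M')^*M'$ are $\tfrac12(1+\mathrm{Re}\,\omega_j)\mu_j^2\le\mu_j^2$. The top value $1$ is attained at $\phi_j=1_{\Alt(n)}$ (where $\mu_j=\omega_j=1$); for every other $j$, the inequality $|\mu_j|\le\lambda_1(M|_{\Alt(n)})$ forces $\sigma_1(M')^2\le\lambda_1(M|_{\Alt(n)})^2$, hence $\lambda_1(\widetilde{M})\le\lambda_1(M|_{\Alt(n)})$, which is exactly the claimed inequality on spectral gaps.

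The main obstacle is the last step: the inequality $|\mu_j|\le\lambda_1(M|_{\Alt(n)})$ for every $\phi_j\not\propto 1_{\Alt(n)}$, equivalently $\lambda_{\min}(M|_{\Alt(n)})\ge -\lambda_1(M|_{\Alt(n)})$. This is not formal; it depends on the explicit diagonalization of the $3$-cycle walk carried out in Appendix~\ref{sec:juice} and, underneath it, on the character-ratio bound $|\chi_\rho(c)|/\dim\rho\le (n-4)/(n-1)$ for every nontrivial irreducible $\rho$ of $\Sym(n)$ (or $\Alt(n)$) and every $3$-cycle $c$ --- with the standard representation saturating the bound. All other steps are formal consequences of the bipartite swap structure and the class-function property of $p'$.
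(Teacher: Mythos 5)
Your block-decomposition-and-SVD approach is logically sound, but it is considerably more machinery than the statement requires. The paper's proof is much more direct: it splits $L^2(\Sym(n))$ into the $2$-dimensional space $H^\perp$ of functions constant on each coset of $\Alt(n)$ and its orthocomplement $H$. On $H^\perp$, since $M$ commutes with $T:=\tfrac12(\mu_g+\mu_{g^{-1}})\ast(\,\cdot\,)$, the operator $\widetilde{M}=TM$ has eigenvalues $\pm 1$, which do not affect the spectral gap; on $H$ one simply observes $\|\widetilde{M}f\|_2=\|T(Mf)\|_2\le\|Mf\|_2\le(1-\delta)\|f\|_2$, because convolution with a probability measure is a contraction. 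No singular values, no explicit common eigenbasis. Your inequality $\tfrac12(1+\mathrm{Re}\,\omega_j)\mu_j^2\le\mu_j^2$ is the same contraction bound in a heavier disguise, and your top-singular-value argument just re-derives that the coset-wise-constant block carries the eigenvalues $\pm 1$.

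More importantly, you have correctly located the one genuinely non-formal step, and you are right to flag it. Both your proof and the paper's hinge on the claim $|\mu_j|\le\lambda_1(M)$ for every non-trivial eigenvalue of the $3$-cycle walk, i.e.\ $\lambda_{\min}(M)\ge-(1-3/(n-1))$, i.e.\ $\|M\|\le 1-\delta$ on the orthocomplement of the constants. The paper asserts "for every $f\in H$ we know that $\|Mf\|_2\le(1-\delta)\|f\|_2$", but that is an \emph{operator-norm} statement, not merely a bound on the second-largest eigenvalue, and it is not what Appendix~\ref{sec:juice} proves: the increasing-switch argument in Lemma~\ref{snap} gives only upper bounds on eigenvalues, saying nothing about the most negative one. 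Indeed the inequality fails at $n=4$, where the $3$-cycle walk on $\Alt(4)$ has $\lambda_1=0$ while the two non-trivial one-dimensional representations yield the eigenvalue $-1/2$. For $n$ large (indeed $n\ge 5$) it does hold, by the character-ratio estimate you quote, and that is all the paper needs. So this is a minor, repairable gap that the paper's proof quietly assumes rather than proves; your write-up is more careful in isolating exactly where the extra input is needed, even if the surrounding machinery is more than necessary.
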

Since Prop.~\ref{prop:lambda} states that the spectral gap of $M$ on
$L^2(\Alt(n))$ is $3/(n-1)$, Prop.~\ref{prop:garna} implies that the spectral gap of $\widetilde{M}$ on $L^2(\Sym(n))$ is at least $3/(n-1)$.
\begin{proof}
The operator $\widetilde{M}$ is a convolution with a symmetric probability 
measure, namely, the average of the uniform probability measure on $g C$
and the uniform probability measure on $g^{-1} C = C g^{-1}$. (Here
$g^{-1} C = C g^{-1}$ because $C = g C g^{-1}$.) In particular, $\widetilde{M}$
is self-adjoint.

Let us first examine the action of $\widetilde{M}$ on eigenfunctions $f$ of $M$
with eigenvalue $1$. Any such function must be constant on $\Alt(n)$ (say
equal to $a$) and constant on $\Sym(n)\setminus \Alt(n)$ (say equal to $b$).
Since $M$ and $\widetilde{M} = (1/2) (g + g^{-1}) M$ commute (thanks to 
$g C g^{-1} = C = g^{-1} C g$), $f$ must also be an eigenfunction of 
$\widetilde{M}$ with eigenvalue $\lambda$, say. Then $\lambda a = b$ and 
$\lambda b = a$; hence $\lambda^2=1$, and so either $\lambda=1$ or $\lambda=-1$.
If $\lambda=1$, then $a=b$, and so $f$ is just a constant function on $\Sym(n)$.
(If $\lambda=-1$, then $a=-b$. Obviously, we need not worry about $\lambda=-1$,
since then $1-\lambda=1-(-1)\geq 2$, and $2$ is certainly at least as large
as the spectral gap of $M$ on $L^2(\Alt(n))$.)

Now consider the action of $\widetilde{M}$ on the space $H$ of functions $f$
on $\Sym(n)$ such that $f|_{\Alt(n)}$ is orthogonal to constant functions on
$\Alt(n)$ and $f|_{\Sym(n)\setminus \Alt(n)}$ 
is orthogonal to constant functions on $\Sym(n)\setminus \Alt(n)$.  
Then, for every $f\in H$, we know that $\|Mf\|_2\leq (1-\delta) \|f\|_2$, where
$\delta$ is the spectral gap of $M$ on $L^2(\Alt(n))$. Now, convolution with
$(1/2) (\mu_g + \mu_{g^{-1}})$ is an operator of norm at most $1$, simply
because convolution with any probability measure is an operator of norm
at most $1$. Hence
\[\|\widetilde {M} f\|_2 = \left\|\frac{1}{2} (\mu_g + \mu_{g^{-1}})(M f) \right\|_2 \leq \|M f\|_2 \leq (1-\delta) \|f\|_2,\]
proving our statement.
\end{proof}

Let us now examine the mixing time. We now
 work with the normalized $\ell^p$-norm:
\begin{equation}\label{eq:normell}
|f|_p = \left(\frac{1}{|G|} \sum_x |f(x)|\right)^{1/p}.\end{equation}
(This is consistent with the normalization of the $\ell^1$-norm in the 
introduction.) We know from (\ref{eq:l2mix}) that
$t_{\text{mix},\epsilon/|G|,\ell^2} \ll_\epsilon n \log n$ 
for $G=\Alt(n)$
and $S$ equal to $C$, the set of all $3$-cycles. Consider now
$G=\Sym(n)$ and a random walk with respect to the distribution
\begin{equation}\label{eq:luchti}
\mu'(x) = \frac{1}{2} \mu(x) +  \frac{1}{2} \mu(g^{-1} x),\end{equation}
where $g\in \Sym(n)\setminus \Alt(n)$ and
$\mu = 1_{\{e\}}/2 + 1_C/(2 |C|)$.
The class $C$ is invariant under conjugation by $g$
(or by any other element). Hence, a random walk of length $k$
with respect to $\mu'$
gives the result $g^r x$, where $x$ is the outcome of
a random walk of length $k$ with respect to $\mu$ (i.e., a lazy random walk 
of length $k$ with respect to $C$) and $r$ is a random integer that is both
independent of the random walk and of equidistributed parity. In other
words,
\[(\mu')^{(k)} = \sum_{r=0}^k p_r \mu^{(k)}(g^{-r} x),\]
where $\sum_{\text{$r$ odd}} p_r = \sum_{\text{$r$ even}} p_r = 1/2$.
It is easy to check that this implies that,
 for any left-invariant distance function $d$ (such as $d = \ell^2$),
\begin{equation}\label{eq:beeth}\begin{aligned}
d\left((\mu')^{(k)},\frac{1_G}{|G|}\right)
\leq d\left(\mu^{(k)},\frac{1_{\Alt(n)}}{|\Alt(n)|}\right).\end{aligned}
\end{equation}
(We are using the fact that $g^r \Alt(n) = \Alt(n)$ for $r$ even, and
$g^r \Alt(n) = \Sym(n) \setminus \Alt(n)$ for $r$ odd.)

Before we proceed, we should make a brief remark on how $\ell^p$-mixing times
relate to each other for different $p$. It is well-known that, if 
$\ell^p$ norms are defined as in (\ref{eq:normell}) and $p\leq q$, then
$|\cdot|_p\leq |\cdot|_q$ for $p\leq q$; this is a special case of
Jensen's inequality (see, e.g., \cite[Thm.~3.3]{MR924157}). 
This allows us to use $\ell^q$-mixing times to bound $\ell^p$-mixing times
for $q\geq p$.
There is a way (also well-known) to use $\ell^2$-mixing times
to bound $\ell^\infty$-mixing times: for any measure $\mu$ on any finite
group $G$ and any $x\in G$,
\[\begin{aligned}
\mu^{(2k)}(x) &=
\sum_{g\in G} \mu^{(k)}(g) \mu^{(k)}(g^{-1} x) \\
&= \sum_{g\in G} \frac{1}{|G|}\cdot \mu^{(k)}(g^{-1} x) 
+ 
\sum_{g\in G} \left(\mu^{(k)}(g) - \frac{1}{|G|}\right) 
\left(\mu^{(k)}(g^{-1} x) - \frac{1}{|G|}\right)\\
&+
\sum_{g\in G} \left(\mu^{(k)}(g) - \frac{1}{|G|}\right) \frac{1}{|G|}
\leq \frac{1}{|G|} + |G| \left|\mu^{(k)} - \frac{1}{|G|}\right|_2^2,
\end{aligned}\]
i.e., $|\mu^{(2k)} - 1/|G||_\infty \leq |G| |\mu^{(k)} - 1/|G||_2^2$. 
This implies that
\begin{equation}\label{eq:argu}
t_{\mix,\epsilon^2/|G|,\ell^\infty} \leq 2 t_{\mix,\epsilon/|G|,\ell^2}.\end{equation}
\begin{proof}[Proof of Theorem \ref{thm:mix}] 
We would like to estimate the spectral gap for the random 
choice of generators by comparing it with the spectral gap for 
either $M$ or $\widetilde{M}$.
We will use a comparison technique from \cite{MR1245303}.

Let $S$ be a symmetric set of generators of $G$. For $y \in G$ and 
$s \in S$ we define $N(s,y)$ to be the number of times $s$ occurs in a chosen
expression for $y$ as a product of elements of $S$.
Let $p$ and $p'$ be symmetric probability distributions on $G$. 
Suppose that the support of $p$ contains $S$.

Consider the following quantity:
\begin{equation}\label{eq:bethle}
A  = \max_{s \in S}   \frac{1}{p(S)}   \sum_{y \in G}  |y|  N(s,y)  p'(y).
\end{equation}
We can use $A$ to compare 
the spectral gap $\delta(p) = \lambda_0(p) - \lambda_1(p)$ 
for the convolution by $p$
with the spectral gap $\delta(p) = \lambda_0(p') - \lambda_1(p')$ 
for the convolution by $p'$ \cite{MR1245303}:
$$\delta(p)   \geq \frac{1}{A} \delta(p').$$ 
We set $S = \{g,h,g^{-1},h^{-1}\}$,
where $g,h\in \Sym(n)$ are chosen randomly. 
If
$g,h\in \Alt(n)$, 
we let 
$p'$ be uniformly supported 
on the set $C$ of all 3-cycles; otherwise, we assume without
loss of generality that $g\notin \Alt(n)$, and we let $p'$ be the average of
the uniform probability distribution on $g C$ and the uniform probability 
distribution on $g^{-1} C$. We set $p = \mu$, with $\mu$
given as in (\ref{eq:ocio}).
 From Proposition \ref{prop:croute}, we get that,
with probability $1-o(1)$, $A\ll (n (\log n)^c)^2$,
where $c$ and the implied constant are absolute. 
Indeed, Prop.~\ref{prop:croute} assures us that $G=\langle g,h\rangle$
is either $\Alt(n)$ or $\Sym(n)$; even more importantly, Prop.~\ref{prop:croute}
tells us that the diameter of $G$ with respect to $S$ is $O(n (\log n)^c)$
if $G=\Alt(n)$, and also that it is $O(n (\log n)^c)+1 = O(n (\log n)^c)$ if
$G=\Sym(n)$. Since we can bound both $|y|$ and $ N(s,y)$ by the diameter of $G$
with respect to $S$, this means that $|y|$ and $N(s,y)$ are both $O(n (\log n)^c)$. Hence $A\ll (n (\log n)^c)^2$.

We know that
\[\delta(p') \geq \frac{3}{n-1}\]
by Prop.~\ref{prop:lambda} if $G=\Alt(n)$, and by Prop.~\ref{prop:lambda}
and Prop.~\ref{prop:garna}. We conclude that
\[\lambda_0(p) - \lambda_1(p) = \delta(p) \geq \frac{1}{A} \cdot \frac{3}{n-1}
\gg \frac{1}{(n (\log n)^c)^2} \cdot \frac{3}{n} > \frac{1}{n^3 (\log n)^{2 c}}.
\]

We could bound the mixing time by $O\left(n^4 (\log n)^{O(1)}\right)$
using this spectral gap estimate. We will do better by working with
mixing times directly.

Again, we work with $S = \{g,h,g^{-1},h^{-1}\}$.
If $g,h\in \Alt(n)$, we let $p'=\mu$, where $\mu=1_{e}/2+1_C/(2|C|)$, for $C$
the set $C$ of all $3$-cycles; otherwise, we assume w.l.o.g.\ that 
$g\notin \Alt(n)$, and we let $p' = \mu'$, where $\mu'$ is as in 
(\ref{eq:luchti}). We let $p = 1_{e}/2 + 1_{|S|}/(2 |S|)$, just as before.
By the same argument as above, the quantity $A$ defined in 
(\ref{eq:bethle}) is $\leq C (n (\log n)^c)^2$ (where $C$, $c$ are 
absolute constants) 
with probability $1-o(1)$
(for $g$ and $h$ random). Now, a comparison result
in \cite{MR1245303} allows us to finish the task. We will quote the result
as stated in \cite[Thm. 10.2 and 10.3]{MR2023654}:
\[\left|p^{(k)} - \frac{1}{|G|}\right|_2^2 \leq |G| e^{-k/2 A} + 
\left|(p')^{(\lfloor k/2 |A|\rceil)} - \frac{1}{|G|}\right|_2^2.
\]
(Some of the terms in \cite[Thm. 10.2]{MR2023654} disappear due to the
fact that the spectrum of $p$ is non-negative.)
At the same time, by (\ref{eq:l2mix}) and (\ref{eq:beeth}),
\[\left|(p')^{(k')} - \frac{1}{|G|}\right|_2 \leq \frac{\epsilon}{|G|}\]
for any $k'\geq C_\epsilon n \log n$, where $C_\epsilon$ depends only on
$\epsilon$. We set \[k = \lceil 2 |A| \cdot \max(4,C_\epsilon) n \log n
\rceil,\]
and we obtain that 
\[\left|p^{(k)} - \frac{1}{|G|}\right|_2^2 \leq \frac{1}{|G|^3} + 
\frac{\epsilon^2}{|G|^2}.\]
Setting $\epsilon = 1/2$, we get that $|p^{(k)}-1/|G||_2 \leq 
(2/3) |G|$ (say) for $n$ larger than a constant. Hence, by (\ref{eq:argu}),
\[t_{\mix,\frac{1}{2|G|},\ell^\infty} \leq 
2 t_{\mix,\frac{2/3}{|G|},\ell^2} \leq 2 k \ll n^3 (\log n)^{2 c + 1}.\]

\end{proof}

\appendix

\section{The spectral gap and the mixing time of $\Alt(n)$ with respect to $3$-cycles}\label{sec:juice}

We need to bound the spectral gap of $\Alt(n)$ with respect to the generating
set consisting of all $3$-cycles in $\Alt(n)$. We will actually compute the
spectral gap exactly.

Let us first review the literature briefly.
It was computed in \cite[p. 175]{MR626813} that,
for $G=\Sym(n)$ and $S$ equal to the set $\{(i j): 1\leq i<j\leq n\} \cup
\{e\}$, with the identity $e$ being given weight $1/n$, the eigenvalue gap
$\lambda_0-\lambda_1$ is $2/n$. We will also need a result for $S$ equal
to the set of $3$-cycles. For such an $S$, we could deduce a bound of
$\lambda_0 - \lambda_1 \gg 1/(n \log n)$ from \cite{MR2884874}.
Here we will follow the approach of \cite{MR626813} to show that
$\lambda_0 - \lambda_1 = 3/(n-1)$.

We will now remind the
reader of some basics in the representation theory of finite groups.
A {\em representation} $\rho$ of a finite group $G$ 
is a homomorphism from $G$ to the group of invertible
linear operators of vector space $V$. We write
$d_\rho$ for the dimension of $V$; we will consider only the case of
$V$ finite-dimensional.

A representation $\rho$ 
is said to be {\em irreducible} if there is no non-trivial $\rho$ invariant
subspace of $V$. From now on, $V$ will be a vector space over $\mathbb{C}$. 
 Schur's lemma states that a linear operator from $V$ to $V$ which commutes with
an irreducible representation is a multiple of the identity.
Two representations $\rho$, $\rho'$ are {\em equivalent} if they are conjugates of each other, i.e.,
if there is an isomorphism $\phi:V\to W$ such that $\rho'(g) = 
\phi \circ \rho(g) \circ \phi^{-1}$.

Given a representation $\rho$ and a function $p$ on $G$, we
define the Fourier 
transform of $p$ by
\[\rho(p) =      \sum_{\gamma  \in G}    p(\gamma)   \rho (\gamma),\]
which is an endomorphism from $V$ to $V$. As usual,
the Fourier transform transforms a convolution into a multiplication:
$\rho(p_1 \ast p_2) = \rho(p_1) \rho(p_2) = \rho(p_2) \circ \rho(p_1)$.

The character $\chi_{\rho}:G\to \mathbb{C}$ of a 
representation $\rho$ is defined by
$\chi_{\rho} (  \gamma)  = \tr  (\rho ( \gamma))$.
Characters are constant on conjugacy classes.

Now consider any finite group $G$ and any irreducible representation $\rho$
of $G$.
Let $p$ be a function from $G$ to the complex numbers which is constant 
on each conjugacy class. Put the conjugacy classes in some arbitrary order.
Let $p_i$ be the value of $p$ on the $i$-th conjugacy class,
$n_i$ the cardinality of the $i$-th conjugacy class, and $\chi_{\rho,i}$ the value of 
$\chi_{\rho}$ on the $i$-th conjugacy class. Then
\begin{equation} \label{eq:irr}
  \rho (p)  =  \left(\frac{1}{d_\rho}
   \sum_i    p_i    n_i \chi_{\rho,i}    \right)
\text{Id}.
\end{equation}

Indeed 
$ \rho ( p ) = \sum_{\gamma}   p(\gamma)  \rho(\gamma)$ can be written as
$  \sum_i   p_i M_i$ where $M_i$ is the sum of $\rho(\gamma)$ over the $i$-th
conjugacy class. By the definition of the conjugacy class each matrix $M_i$
commutes  with $\rho ( \gamma)$. Since $\rho$ is irreducible, Schur's lemma
tells us that $M_i = c_i \text{Id}$. The trace of $M_i$ is equal both to
$n_i \chi_{\rho,i}$ and to $c_i d_{\rho}$; this gives the above formula.
  
\medskip

The left regular representation $\lambda$ is defined as follows.
  For $f \in \ell^2 (G)$,
\[\lambda (\gamma) (f) (\gamma')  = f (\gamma^{-1} \gamma ').\]

The action of $\lambda(p)$ on $\ell^2 (G)$ corresponds to a convolution by $p$.
Indeed, for $f \in \ell^2(G)$
$$ \lambda(p)  f (\gamma)   =   \sum_{\nu \in G}   \lambda (\nu) p (\nu) f(\gamma) =
 \sum_{\nu \in G} p (\nu) f(  \nu^{-1}\gamma)  = p \ast f (\gamma).
$$

The regular representation (and more generally any representation) 
can be decomposed into irreducible representations -- that is to say, it can be
written as a direct sum of irreducible representations.  Every 
irreducible representation appears in the
decomposition of the regular representation. Therefore, it follows from (\ref{eq:irr}) that, for $p$ constant on conjugacy classes (for instance, equidistributed
on $k$ cycles), the eigenvalues of the convolution operator 
$f\mapsto p\ast f$ on $\ell^2(G)$
are precisely the values of
\begin{equation}\label{eq:malgo}
\frac{1}{d_{\rho}}   \sum_i    p_i    n_i \chi_{\rho,i}\end{equation}
as $\rho$ ranges over the irreducible representations of $G$.

We will apply this theory to symmetric groups  $G=\Sym(n)$. We let
$p$ be the uniform probability measure on the set $C$ of all $3$-cycles.
Since $C$ generates only the alternating group $\Alt(n)$, 
which is a subgroup of index 2 in $\Sym(n)$, and we are using 
the representation theory of $\Sym(n)$,
this will result in doubling the multiplicity of all eigenvalues.
We can see this explicitly as follows. Let $f:\Alt(n)\to \mathbb{C}$
be an eigenfunction of the action of $p$ within the 
left regular representation of $\Alt(n)$ (i.e., $C f = \lambda f$, where
$Cf$ is defined by $C f(h)\mapsto \sum_g p(g) f(g h)$). Then $f$
defines two eigenfunctions of the action of $p$ within the left regular
representation of $\Sym(n)$, both of them restricting to $f$ on $\Alt(n)$:
given a fixed $s\in \Sym(n)\setminus \Alt(n)$, we let
$f(g s) = f(g)$ for $g\in \Alt(n)$ to define one of the eigenfunctions,
and $f(g s) = - f (g)$ to define the other one. 

(We can also see the doubling of the multiplicity of all eigenvalues
more abstractly, by using a result such as 
\cite[\S 5, Prop.~5.1]{MR1153249}.)

By a {\em partition} $\lambda = (\lambda_1 , ...  , \lambda_k)$ of $n$
we mean a non-increasing sequence of positive integers $\lambda_j$ with sum $n$.

It is a fundamental fact from the representation theory of $\Sym(n)$ that the irreducible
representations of $\Sym(n)$ are in one to one correspondence with partitions of $n$.

Let $p$ be the uniform probability measure on the set $C\subset \Alt(n)$ of
all $3$-cycles. Then, by (\ref{eq:irr}),
\[\rho(p) =  \frac{\chi_{\rho}(\sigma)}{d_{\rho}} \Id,\]
where $\sigma$ is a 3-cycle. Therefore, the eigenvalues of $p$ are
\begin{equation}\label{eq:moses} \chi_{\rho} ( \sigma)   /  d_{\rho}
\end{equation}
as $\rho$ ranges over all irreducible representations of $\Alt(n)$.

By a computation of Frobenius (as in \cite[(5.2)]{ingram}), 
\begin{equation}\label{eq:sampo}\frac{\chi_{\rho} ( \sigma)}{d_{\rho}}  =
\frac{M_3}{2n(n-1)(n-2)}  -   \frac{3}{2(n-2)}\end{equation}
for \begin{equation}\label{eq:kantele}
M_3 = \sum _{j=1}^k     \left( (\lambda_j   -  j  )   (\lambda_j   -  j  +1)  (2 \lambda_j  - 2j  +1)   +  j(j-1)(2j-1)\right) ,\end{equation}
where $(\lambda_1 , \dotsc , \lambda_k)$ is the partition corresponding  to $\rho$.

There is a following partial order on partitions of $n$. Let
$\lambda = ( \lambda_1, ... , \lambda_k)$ and 
$\lambda' = ( \lambda'_1, ... , \lambda'_{k'})$ be partitions of $n$. We define
$\lambda \geq \lambda'$  if $k \leq k'$ and
$\lambda_1  \geq \lambda'_1$, $\lambda_1  + \lambda_2  \geq  \lambda'_1+ \lambda'_2$, ...,
$\lambda_1  +.. + \lambda_k  \geq  \lambda'_1+ ...+  \lambda'_k$.

We say that a partition $\lambda'$ is obtained from a partition $\lambda$ by a single switch
if for some indices $a<b$, $\lambda_a = \lambda'_a +1$, $\lambda_b = \lambda'_b -1$,
and 
$\lambda_j$ and $\lambda'_j$ coincide for all other indices. (If
$\lambda'_b=1$, then the partition $\lambda$ simply ends at $k=k'-1$;
otherwise,
$k=k'$.)

It is not difficult to see that for any partitions  $\lambda \geq \lambda'$ of $n$
there is decreasing sequence of partitions obtained by a sequence of
switches starting at 
$\lambda$ and ending at  $\lambda'$.

%
%
%
%
%
%
%
%
%
%
%
%
%


\begin{lem} \label{sgap}
Consider two partitions $\lambda > \lambda'$ which differ by a single switch, i.e.
for some indices $a <b$, $\lambda_a  = \lambda'_{a}+1$, $\lambda_b  = \lambda'_b -1$,
and for all other indices $\lambda_j$ and $\lambda'_j$ coincide. Then
 the value of $M_3$ for $\lambda$ minus the value of $M_3$ for
$\lambda'$ equals
$$6   (   (  \lambda'_a +1 - a)^2      -  (\lambda'_b -b)^2   ).$$
\end{lem}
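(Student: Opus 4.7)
The plan is to subtract the sums defining $M_3(\lambda)$ and $M_3(\lambda')$ term by term; almost everything cancels, and the nontrivial work is to identify what remains. I would begin by setting $f(m) = m(m+1)(2m+1)$, so that the $j$-th summand in (\ref{eq:kantele}) reads $f(\lambda_j - j) + j(j-1)(2j-1)$, and recording the elementary identity
\[
f(m+1) - f(m) = 6(m+1)^2,
\]
which is immediate from the classical formula $\sum_{i=1}^{m} i^2 = f(m)/6$.

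The one real subtlety is that when $\lambda'_b = 1$, the switch forces $\lambda_b = 0$, so $\lambda$ has one fewer part than $\lambda'$ and the two sums nominally range over different index sets. To handle both cases uniformly I would extend every partition by zero parts and check that a zero part contributes nothing to $M_3$: a direct computation gives $f(-j) = -j(j-1)(2j-1)$, so $f(-j) + j(j-1)(2j-1) = 0$. This is precisely the reason for the otherwise-mysterious constant term $j(j-1)(2j-1)$ in the definition of $M_3$. With this zero-padding convention in place, the sums $M_3(\lambda)$ and $M_3(\lambda')$ both run over $j \geq 1$ and agree for every $j \notin \{a,b\}$.

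What remains is a one-line computation. Using $\lambda_a - a = (\lambda'_a - a) + 1$ and the difference formula for $f$, the index $a$ contributes
\[
f((\lambda'_a - a) + 1) - f(\lambda'_a - a) = 6(\lambda'_a + 1 - a)^2;
\]
using $\lambda_b - b = (\lambda'_b - b) - 1$, the index $b$ contributes
\[
f((\lambda'_b - b) - 1) - f(\lambda'_b - b) = -6(\lambda'_b - b)^2.
\]
Summing yields the claimed expression. The only mildly delicate step is the zero-padding observation for $\lambda'_b = 1$; the rest is mechanical and follows from a single application of $f(m+1) - f(m) = 6(m+1)^2$ at each of the two altered indices.
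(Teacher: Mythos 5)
Your proof is correct and follows essentially the same route as the paper's: compare the two sums term by term, note that only the indices $a$ and $b$ change, and compute the resulting differences, which is exactly what the paper does by expanding the cubics directly. Two small points in your favor: introducing $f(m)=m(m+1)(2m+1)$ and invoking $f(m+1)-f(m)=6(m+1)^2$ (from $\sum_{i\leq m} i^2 = f(m)/6$) avoids the explicit cubic expansion, and your zero-padding observation that $f(-j)+j(j-1)(2j-1)=0$ makes rigorous the edge case $\lambda'_b=1$ (where $\lambda$ has one fewer part), which the paper handles only by a terse parenthetical remark; it also gives a satisfying explanation of the constant term $j(j-1)(2j-1)$ in the definition of $M_3$.
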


\begin{proof}
Consider the expression (\ref{eq:kantele}) defining $M_3$. 
When one makes a switch there is
a change in its value for two values for $j$, namely $a$ and $b$.

For $a$ the difference is 
$$( \lambda'_a - a +1) 
( \lambda'_a - a + 2) 
(2 \lambda'_a - 2a + 3)   -
( \lambda'_a - a) 
( \lambda'_a - a + 1) 
(2 \lambda'_a - 2a + 1),
$$
which equals $ 6( \lambda'_a - a + 1)^2$. In the same way,
the difference for $b$ is equal to
$6( \lambda'_b - b)^2$ (and, in particular, it is $6(1-b)^2$ when $\lambda_b'=1$).
\end{proof}

To any partition $\lambda= (\lambda_1, \ldots , \lambda_k)$ one can associate 
a Young diagram, which consists of $n$ squares.
The first column of the diagram consists of $\lambda_1$ squares, the second of 
$\lambda_2$ and so on. In the Young diagram correspond to the conjugate representation,
the first row of the diagram consists of $\lambda_1$ squares, the second of 
$\lambda_2$ and so on. 
In other words, the diagram is flipped so that columns become rows. It is
easy to see from (\ref{eq:irr})
 that the eigenvalues corresponding to conjugate representations are the same.

\begin{lem} \label{snap}
Consider the action of the measure supported uniformly on $3$-cycles
in the regular representation of $\Alt(n)$.
The largest eigenvalue 1 corresponds to 
partitions $(n)$ and $(1,1, \ldots,1)$. The second 
largest eigenvalue corresponds to partitions $(n-1,1)$ and $(2,1, \ldots,1)$.
\end{lem}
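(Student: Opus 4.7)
The plan is to reduce the lemma to an extremal statement about $M_3(\lambda)$ on partitions of $n$ and establish it by induction.

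First, formulas (\ref{eq:moses}) and (\ref{eq:sampo}) present each eigenvalue as an affine function of $M_3(\lambda)$, so ranking eigenvalues amounts to ranking $M_3$ across partitions of $n$. Substitution into (\ref{eq:kantele}) gives $M_3((n)) = n(n-1)(2n-1)$ (eigenvalue $1$) and $M_3((n-1,1)) = (n-2)(n-1)(2n-3)+6$ (eigenvalue $1-3/(n-1)$); by the conjugation remark preceding Lemma \ref{sgap}, the partitions $(1^n)$ and $(2,1^{n-2})$ yield the same values as their conjugates.

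Next, I would establish the clean identity $M_3(\lambda) = 6 \sum_{(i,j)\in\lambda}(j-i)^2$ (six times the sum of squared contents of boxes) by building $\lambda$ box by box: appending a corner box of content $c$ to any partition changes the right-hand side by $6c^2$, and the same increment holds for $M_3$ by a computation with (\ref{eq:kantele}) in the same spirit as Lemma \ref{sgap}.

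Equipped with this, I would prove two coupled extremal claims by induction on $n$: (A) $\sum (j-i)^2 \leq \sum_{k=0}^{n-1} k^2$, with equality iff $\lambda\in\{(n),(1^n)\}$; and (B) for $\lambda\notin\{(n),(1^n)\}$, $\sum(j-i)^2 \leq \sum_{k=0}^{n-2} k^2 + 1$, with equality iff $\lambda\in\{(n-1,1),(2,1^{n-2})\}$. The inductive step removes a corner box $(i,\lambda_i)$ of $\lambda$, producing $\lambda^-$ of size $n-1$ and the decomposition $\sum c^2(\lambda) = \sum c^2(\lambda^-) + (\lambda_i-i)^2$; for (A) any corner together with the trivial bound $(\lambda_i-i)^2\leq (n-1)^2$ works, while for (B) I pick a corner with $|\lambda_i-i|\leq n-2$. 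Such a corner exists for any $\lambda\neq (n),(1^n)$, since a corner with $|\lambda_i-i|\geq n-1$ forces $\lambda_i\geq n$ (hence $\lambda=(n)$) or $i\geq n$ (hence $\lambda=(1^n)$).

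The main obstacle is the equality case in (B): tightness in the inductive bound forces $\lambda^-\in\{(n-2,1),(2,1^{n-3})\}$ together with $(\lambda_i-i)^2=(n-2)^2$. Enumerating the one-box extensions of these two partitions and checking the content of the added box leaves only $(n-1,1)$ and $(2,1^{n-2})$; the alternatives $(n-2,2)$, $(n-2,1,1)$ for $n\geq 5$, together with $(3,1^{n-3})$ and $(2,2,1^{n-4})$, are all ruled out by content mismatch.
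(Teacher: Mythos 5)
Your route is genuinely different from the paper's. The paper applies Lemma~\ref{sgap} directly in a chain argument: starting from any partition, or its conjugate, whose first part is at least the number of parts, the sequence of inverse switches $(\lambda_1,\ldots,\lambda_k)\mapsto(\lambda_1+1,\ldots,\lambda_k-1)$ strictly increases $M_3$ at each step, terminates at $(n)$, and passes through $(n-1,1)$ just before $(n)$; the lemma then follows. You instead observe the identity $M_3(\lambda)=6\sum(j-i)^2$ (six times the second power sum of contents) --- which does fall straight out of (\ref{eq:kantele}), since the $j$-th summand there is exactly $6\sum_{i=1}^{\lambda_j}(j-i)^2$ --- and run a box-removal induction. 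That identity is a clean observation, is more self-contained than the switch chain, and makes the extremal role of $(n)$, $(1^n)$, $(n-1,1)$, $(2,1^{n-2})$ transparent.

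However, the inductive step for (B) has a gap as written. To pass from $\sum c^2(\lambda)=\sum c^2(\lambda^-)+c^2$ and $c^2\leq(n-2)^2$ to $\sum c^2(\lambda)\leq\sum_{k=0}^{n-2}k^2+1$, you must invoke hypothesis (B) at size $n-1$ for $\lambda^-$, which requires $\lambda^-\notin\{(n-1),(1^{n-1})\}$. The condition you impose on the removed corner, $|\lambda_i-i|\leq n-2$, holds automatically for \emph{every} corner of \emph{every} $\lambda\neq(n),(1^n)$, so it does not enforce the needed restriction on $\lambda^-$: removing the corner $(2,1)$ from $\lambda=(n-1,1)$ has $|c|=1\leq n-2$ yet yields $\lambda^-=(n-1)$, where only (A) applies to $\lambda^-$ and the resulting bound $\sum_{k=0}^{n-2}k^2+(n-2)^2$ is far too weak. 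The fix is easy: a corner removal produces $(n-1)$ or $(1^{n-1})$ only if $\lambda\in\{(n),(n-1,1),(1^n),(2,1^{n-2})\}$, so for $\lambda$ outside this list any corner is safe, while for $\lambda\in\{(n-1,1),(2,1^{n-2})\}$ with $n\geq 4$ one picks the other removable corner (removing $(1,n-1)$ from $(n-1,1)$ gives $\lambda^-=(n-2,1)$), and $n=3$ is a one-line base case since $(2,1)$ is the only partition outside $\{(3),(1^3)\}$. With that adjustment your argument is correct.
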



\begin{proof}[Proof of Lemma \ref{snap}]
For any partition $\lambda= (\lambda_1, \ldots , \lambda_k)$ and its conjugate 
 $\lambda'= (\lambda'_1, \ldots , \lambda'_{k'})$ we have
$\lambda'_1 = k$ and $\lambda_1=k'$.

From any partition we can obtain the partition $(n)$ by doing a sequence of
inverse switches of the form 
$$ ( \lambda_1', \ldots, \lambda_k')  \rightarrow  ( \lambda_1' + 1, \ldots,
\lambda_k' - 1)$$
in case $\lambda_k' \geq 2$, or
$$ ( \lambda_1', \ldots, \lambda_{k-1}', 1)  \rightarrow  ( \lambda_1' + 1,
\ldots, \lambda_{k -1}')$$ if $\lambda_k'=1$.

It follows from Lemma \ref{sgap} that a single switch will increase the eigenvalue as soon as
$\lambda_1' \geq k$.
This condition is satisfied either 
for a partition or its conjugate; moreover, it
 is preserved by the inverse switches described before. 
As conjugation does not change the value of the eigenvalue, by taking if necessary a
conjugate, we can suppose that we consider a partition for which there is sequence of 
switches ending up at the partition $(n)$, each of them increasing the
eigenvalue. The one before the last partition in this sequence is $(n-1,1)$. 
\end{proof}

\begin{prop} \label{prop:lambda}
The spectral gap in the regular representation of $\Alt(n)$ 
for the measure supported uniformly on $3$-cycles is $  3/(n-1).$
\end{prop}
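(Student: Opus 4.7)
The plan is to invoke Lemma \ref{snap} to reduce the problem to an explicit eigenvalue computation for a single partition, and then apply the Frobenius formula (\ref{eq:sampo}) directly. By Lemma \ref{snap}, the largest eigenvalue $\lambda_0 = 1$ comes from the trivial partitions $(n)$ and $(1,1,\dotsc,1)$, while the second largest eigenvalue $\lambda_1$ is realized by the partition $(n-1,1)$ (and equivalently by its conjugate $(2,1,\dotsc,1)$). Thus the spectral gap is $1 - \lambda_1$, and it suffices to compute $\lambda_1$.

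First I would plug the partition $\lambda = (n-1,1)$, for which $k=2$, into formula (\ref{eq:kantele}). The $j=1$ term contributes
\[
(\lambda_1 - 1)\lambda_1(2\lambda_1 - 1) = (n-2)(n-1)(2n-3),
\]
since the second summand $j(j-1)(2j-1)$ vanishes for $j=1$. The $j=2$ term contributes
\[
(\lambda_2 - 2)(\lambda_2 - 1)(2\lambda_2 - 3) + 2\cdot 1\cdot 3 = 0 + 6 = 6,
\]
since $\lambda_2 = 1$ kills the first summand. Hence $M_3 = (n-2)(n-1)(2n-3) + 6$.

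Next I would substitute this into (\ref{eq:sampo}) and simplify. The first piece becomes
\[
\frac{(n-2)(n-1)(2n-3) + 6}{2n(n-1)(n-2)} = \frac{2n-3}{2n} + \frac{3}{n(n-1)(n-2)},
\]
and combining with $-3/(2(n-2))$ via a common denominator yields after factoring $n^2-n-2 = (n-2)(n+1)$ the identity $\lambda_1 = (n-4)/(n-1)$. Therefore
\[
\lambda_0 - \lambda_1 = 1 - \frac{n-4}{n-1} = \frac{3}{n-1},
\]
as claimed.

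The only step requiring any care is the algebraic simplification in the last paragraph, where one must correctly combine the three rational expressions with denominators $2n$, $n(n-1)(n-2)$, and $2(n-2)$; everything else is a direct application of the machinery already developed, so I expect no substantive obstacle.
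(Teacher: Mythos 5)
Your proof is correct and follows exactly the paper's own route: invoke Lemma \ref{snap} to identify $(n-1,1)$ as the partition achieving the second-largest eigenvalue, then compute that eigenvalue via formulas (\ref{eq:kantele}) and (\ref{eq:sampo}). Your computation of $M_3 = (n-2)(n-1)(2n-3)+6$ and the resulting simplification to $\lambda_1 = (n-4)/(n-1)$ are correct, so the paper's asserted ``simple computation'' is carried out accurately.
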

This is, of course, the same as the spectral gap for the Cayley graph of
$\Alt(n)$ with respect to the set of generators consisting of all $3$-cycles.
\begin{proof}
By Lemma \ref{snap}, the spectral gap equals the difference in the
eigenvalues corresponding to  
partitions $(n)$ and $(n-1,1)$.
A simple computation starting from (\ref{eq:sampo})
shows that the eigenvalue associated  to $(n-1,1)$ is $1 - 3/(n-1)$,
whereas the eigenvalue associated to $(n)$ is, naturally, $1$.
Thus, the spectral gap is $3/(n-1)$.
\end{proof}

\begin{center}
* * *
\end{center}

It remains to examine the mixing time with respect to $3$-cycles.
In brief -- it is enough to get a bound for the $(\epsilon/|G|,\ell^2)$-mixing
time, since that can be used to bound the
$(\epsilon/|G|,\ell^\infty)$-mixing 
time (see (\ref{eq:argu})).
 Much of the literature on the subject gives $\ell^1$-mixing times,
which are weaker. 

(We scale $\ell^p$ norms as in (\ref{eq:normell}) 
-- that is, we are working with $G$ as a space of measure $1$.
Recall that, on a space
of measure $1$, $|\cdot |_p\leq |\cdot|_q$ for $p\leq q$, and so the
$(\epsilon,\ell^p)$-mixing time is at most the $(\epsilon,\ell^q)$-mixing time
for $p\leq q$.)

Fortunately, \cite{MR1799892} and \cite{MR1400314} both bound
$\ell^1$-mixing times by
the $(\epsilon/|G|,\ell^2)$-mixing time (via Cauchy-Schwarz); most
of the work in \cite{MR1799892} and \cite{MR1400314}
goes into bounding the $(\epsilon/|G|,\ell^2)$-mixing time, or, what
amounts to the same, bounding the sum $\sum_\rho d_\rho^2
|r_\rho(C)|^{2k}$ in \cite [(10)]{MR1400314}
(appearing with slightly different
notation in Remark 2.5 in \cite{MR1799892}). 
The bound they obtain (following the approach in \cite{MR626813}) is
\begin{equation}\label{eq:l2mix}
t_{\text{mix},\epsilon/|G|,\ell^2} \ll_\epsilon n \log n\end{equation}
for $G=\Alt(n)$ and $S$ the class of $3$-cycles. (In fact, 
\cite{MR1799892} gets the
stronger bound \[t_{\text{mix},\epsilon/|G|,\ell^2} = n \left(\frac{\log n}{3} +
\frac{\log(1/\epsilon)}{2} + O(1)\right),\]
 whereas \cite{MR1400314} proves a result for arbitrary
conjugacy classes.) See also Theorem 9.4 in the survey \cite{MR2023654}.

\section{Algorithmic remarks}\label{sec:algori}
Let $g$ and $h$ be two random elements of $\Sym(n)$. With probability
$1-o(1)$, the Schreier graph corresponding to the action of $S = \{g,h,g^{-1},
h^{-1}\}$ on the set of $3$-tuples of distinct elements of $\lbrack 1,n\rbrack$
is an expander graph. Suppose from now on that this is the case. 

We have shown that, in such a case, the diameter of $G$ with respect to $S$
is $O(n^2 (\log n)^c)$. In other words, for every $\pi\in \langle S\rangle$,
there is a word $w$ of length $O(n^2 (\log n)^c)$ such that $\pi = w(g,h)$. The question
is: can such a word $w$ always be found quickly?

First of all, we have to define our goals, i.e., what is meant by ``quickly''.
It would seem at first sight that we cannot hope for an algorithm taking
less time than the length of $w$, since, in general, it takes time proportional
to the length of $w$ to write down $w$. However, the words $w$ that the proof
of Thm.~\ref{thm:diam} yields are of a very special sort, in that they involve
high powers. To be precise, every word $w$ we find is of the form
\[ w(g,h) = v(g,h,u(g,h)^l),\]
where $v$ is a word of length $O\left(n (\log n)^c\right)$, $u$ is a word
of length $O(\log n)$ and $l<n$. Such a word can be written using 
$O\left(n (\log n)^{O(1)}\right)$ symbols; thus, it is not ruled out {\em 
a priori}
that there may be an algorithm that finds the word in 
$O\left(n (\log n)^{O(1)}\right)$ steps.
This is so even if we assume -- as we shall -- that just multiplying two
elements of $\Sym(n)$ takes time $O(n)$.

Let us explain how the proof we have given strongly suggests a way to
construct just such an algorithm. First of all, the algorithm in
the \cite{BBS04} (and hence that in the proof of Prop.~\ref{prop:croute})
 is in essence algorithmic. There seems to be only one problematic spot:
while it would seem that most (and not just some) random walks make the 
argument work, this stops being the case when we come to the point
in the proof where we
fix the condition $y_2' = (y_1')^{s^{-1}}$. If, on the other hand,
 we prefer not to impose this
condition, we can no longer guarantee that $\lbrack s,\tau\rbrack$ have
non-zero support. 

It seems possible to do without the condition
$y_2' = (y_1')^{s^{-1}}$ while also constructing
elements with non-zero support: (a) it is possible to assume that 
the length $l$ of the long cycle is smaller than $(1-\epsilon) l$
by an appropriate use of \cite{Babai-Hayes};
(b) we can choose to define $s_{i+1} = \lbrack s_i, \sigma^{-1} s_{j_i} \sigma
\rbrack$ for an appropriate $j_i\leq i$, rather than always use $j_i=i$.
Thus modified, the proof of Prop.~\ref{prop:croute} should give a permutation
$\varkappa$ 
of bounded support, as a word of length $O(n (\log n)^{O(1)})$, in time
$O(n (\log n)^{O(1)})$.

What would remain to do would be to show how to express every permutation
in $\Alt(n)$ or $\Sym(n)$ as a word of length $O(n (\log n)^{O(1)})$ in
$g$, $h$ and $\varkappa$, to be found in time  $O(n (\log n)^{O(1)})$. This
is not completely trivial even when $\varkappa$ is a $3$-cycle, in that
the long cycle of the element $v$ (constructed in the proof of Prop.~\ref{prop:croute}) has to be used
in conjunction with the effect of a random walk on $g$ and $h$.
For $\varkappa$ general, the matter is even less obvious.

\begin{center}
* * *
\end{center}

If all one aims for is running time $O(n^2 (\log n)^{O(1)})$, the problem
becomes considerably simpler. In brief -- one can use the algorithm
implicit in the proof of Prop.~\ref{prop:croute} essentially as it stands: 
at the
problematic spot, we can choose $y_1' \in \supp(s)$ arbitrarily, and fix
$y_2' = (y_1')^{s^{-1}}$ as before; then any choice of $y_1\in \supp(s)$,
$y_2\not\in \supp(s)$ is valid -- and there are plenty of such choices.
Summing over them, we see that the probability that 
$(y_1')^{\sigma^{-1}}\in \supp(s)$, $(y_2')^{\sigma^{-1}} \not\in \supp(s)$
is $\gg 1/n$ even in the worst case (which is the case of $|\supp(s)|$
bounded). Thus, we just need to keep generating $\sigma$ (at most
$O(n (\log n)^{O(1)})$ times) until we succeed in finding an $\sigma$ satisfying
$(y_1')^{\sigma^{-1}}\in \supp(s)$, $(y_2')^{\sigma^{-1}} \not\in \supp(s)$
and $|S\cap S^\sigma|\geq (7/6) |S|^2/n$ (with probability
$\geq 1 - 1/n^A$, $A$ arbitrary). 

We follow the rest of the proof of Prop.~\ref{prop:croute}, and we obtain
a $3$-cycle $\kappa$ as a word of length $O(n (\log n)^{c})$, 
in time $O(n^2 (\log n)^{O(1)})$, with probability 
$\geq 1 - \epsilon$; we can bring the probability arbitrarily close to $1$
by repeating the procedure. 
(If $\langle \{g,h\}\rangle = \Sym(n)$, it could happen that we actually
construct a $2$-cycle, rather than a $3$-cycle; in that case; the procedure
is still essentially what we are about to outline, only simpler.)
Once we have the $3$-cycle $\kappa$, one issue remains:
how do we use it to construct all the $3$-cycles we need, as words of
length $O(n (\log n)^{c})$, in time $O(n^2 (\log n)^c)$ (in total) or less?

It would not make sense to construct all $3$-cycles, since there are
$\gg n^3$ of them -- meaning they could not be constructed in time less
than $O(n^3)$. Instead, we start by writing the permutation $\pi$ we are given
as a product of $3$-cycles. (The way to do it is easy and
 well-known.) Our task is
 to express each one of those $O(n)$ cycles as a word of length
$O(n (\log n)^{c})$. Note it would not make sense to simply conjugate our
$3$-cycle $\kappa$ by short random walks, and store the result every time
it happens to be one of the $3$-cycles we need: this would take $\gg n^3$
repetitions. Rather, let us see a way to generate any $3$-cycle we need
rapidly, after a little initial preparation. The way to do this is to use
both our ability to scramble elements by means of random walks, and the
fact that we can use the long cycle in $v$ to shift $3$-cycles around. Let
us see how.

We can assume without loss of generality that the long cycle is labelled
\[(1\; 2 \dotsc l)\]
Recall that $l\geq 3 n/4$.
(Any bound of the form $l\gg n$ would do.)
We can also assume that $\supp \kappa \subset\{1,2,\dotsc,l\}$: if this is
not the case, we are still fine, since, for the outcome $\sigma$ of a short
random walk, $(\supp \kappa)^\sigma \subset \{1,2,\dotsc,l\}$ with
probability
$\geq (l/n)^3-o(1)$, and so, if we take a small number ($\ll \log n$) of
random walks, it is almost certain (probability $1-O\left(n^{-C}\right)$)
that one of them will give us a $\sigma$ such that
\[(\supp \kappa)^\sigma \subset \{1,2,\dotsc,l\}.\]
Notice that $\supp \sigma^{-1} \kappa \sigma = (\supp \kappa)^\sigma$. 
We redefine $\kappa$ to be $\sigma^{-1} \kappa \sigma$, and so we obtain
$\supp \kappa \subset\{1,2,\dotsc,l\}$ after all. We can assume w.l.o.g.\
that $\kappa = (1\; a\; b)$, where $1\leq a,b\leq l$.

First, let us see how to construct a 3-cycle of the form $(1\; 2\; x)$.
This is done as follows. Taking expected values and variances, it is
easy to show that, for the result $\gamma$ of a random walk of length
$C \log n$, the probability that there are $1\leq r<l$, $0\leq s< l$ such that
\begin{equation}\label{eq:koloko}\begin{aligned}
1 + s &\equiv r^{\gamma} \mod l\\
a + s &\equiv (r+1)^{\gamma} \mod l
\end{aligned}\end{equation}
is positive and bounded from below ($\geq (l/n)^2 - o(1)$). Moreover, we
can check in time $O(n \log n)$ whether such $(r,s)$ exist. 
Taking a small number of random walks, it is almost certain
that we find a $\gamma$ and $(r,s)$ satisfying (\ref{eq:koloko}).
Then the short word
\[\gamma v^{-s} (1\; a\; b) v^s \gamma^{-1}\]
gives us a 3-cycle of
the form $(r\; r+1\; ?)$. Changing labels, we can write this as $(1\; 2\; x)$ for
some $x$.

Now $\phi := v (1\; 2\; x)^{-1}$
fixes $1$, and acts on $2,\dotsc,l$ as follows: 
$(2\; 3\; \dotsc\; x-1) (x\; x+1\; \dotsc\; l)$.
By conjugating $(1\; 2\; x)$ by a power of $\phi$, we can construct quickly
an element of the form $(1\; y\; ?)$ for any given $y$ in $\{2,\dotsc,l\}$. 
Conjugating
such an element by a power of $v$, we can construct quickly an element of
the form $(r\; s\; ?)$ for any $r, s \in \{1,2,...l\}$. Since
\[(r\; t\; ?)^{-1} (r\; s\; ?')^{-1} (r\; t\; ?) (r\; s\; ?') = (r\; s\; t)\]
for $?'$ distinct from $t$ and $?$ distinct from $s$, we see that we can construct
quickly any specified 3-cycle of the form 
$(r\; s\; t)$ with $1\leq r,s,t\leq l$.

In turn, this allows us to express any 3-cycle as a short enough word,
in the time we want: we simply try out random walks until we find one that
sends our $3$-cycle to a $3$-cycle with support
 within $\{1,2,\dotsc,l\}$, and then we apply the above
algorithm to that 3-cycle. (Since $l\gg n$, we succeed with probability
$1-O(1/n^C)$, $C$ arbitrary, after $\ll C \log n$ tries.)
In this way, we can construct all the 3-cycles we need - each in time 
$O\left(n (\log n)^{O(1)}\right)$.

Notice that we have tacitly assumed that we are representing (that is, 
store) our permutations either as we usually write them down 
(products of disjoint cycles) or
as maps from $\{1,2,\dotsc,n\}$ to itself; we can go from one of these
two forms of representation to the other one quickly.
 It may be more challenging to solve the 
problem without using heavily the particular way in which permutations
are stored -- or with constraints that forbid us to access directly
the internal representation of a permutation, whatever that representation
may be.


In the end, the importance of finding a solution in time $O(n (\log
n)^{O(1)})$ to the algorithmic problem we have discussed here will depend
on whether there are potential applications. 
There seem to be some: see \cite{MR2925926}.
\bibliographystyle{alpha}
\bibliography{hsz}
\end{document}